\documentclass[11pt]{article}
\usepackage{amsmath,amssymb,amsthm}
\usepackage{geometry}
\usepackage{mathtools}
\usepackage{mathrsfs}
\usepackage[hidelinks]{hyperref}
\usepackage{enumitem} 
\numberwithin{equation}{section}  
\usepackage{cite} 

\newtheorem{theorem}{Theorem}[section]
\newtheorem{lemma}{Lemma}[section]

\newtheorem{proposition}{Proposition}[section]
\newtheorem{remark}{Remark}[section]
\newtheorem{definition}{Definition}[section]

\title{Renormalized Solutions for a Class of Nonlinear Parabolic Equation with a 
	Lower Order Term and Variable Exponents}
\author{
	LI Chunjin \\
	School of Mathematics and Statistics, Hainan University, Haikou, China \\
	\and
	LI Shijun\thanks{Email: sjlee@hainanu.edu.cn} \\
	School of Mathematics and Statistics, Hainan University, Haikou, China \\
	\and
	XU Shaopeng\thanks{Corresponding author: xuxsp@126.com} \\
	School of Mathematics and Statistics, Hainan University, Haikou, China
}
\date{\notag}
\begin{document}
	\maketitle
	\noindent \textbf{Abstract:} We consider a class of nonlinear parabolic equations 
	
	\[
	\dfrac{\partial}{\partial t} b(u)-\nabla \cdot (A(x,t,u,\nabla u))+H(x,t,\nabla u)=f ,
	\]
	where $H$ is a nonlinear lower order term satisfied the Carath$\acute{e}$odory condition and
	\[
	\left\lvert H(x,t,\nabla u)\right\rvert\leqslant g(x,t)\left\lvert \nabla u\right\rvert^{\delta(x)} 
	\]
	with 
	\[
	\delta (x)=\frac{p(x)(N+1)-N}{(N+2)(p(x)-1)}(p^--1) \quad \text{and} \quad p^-=\underset{x\in\bar{\Omega}}{min}\,p(x).  
	\]
	By virtue of truncation metheod,the monotone operator theory
	and a gradient estimate we prove existence of renormalized solutions without coercivity condition on lower order term in the framework of variable exponents.
	\bigskip
	
	\noindent \textbf{Keywords:}  Parabolic equations; renormalized solutions; variable exponents; lower order term
	
	\vspace{1cm}

\section{Introduction}
	
In this paper, we study the class of nonlinear parabolic equations of the type
	
\begin{equation}\label{beginequation}
	\begin{cases} 
		\dfrac{\partial}{\partial t} b(u) - \nabla \cdot(A(x, t, u, \nabla u)) + H(x, t, \nabla u) = f & \text{in } Q_T \\ 
		u = 0 & \text{on } \partial\Omega \times (0, T) \\ 
		b(u(x, 0)) = b(u_0(x)) & \text{in } \Omega.
	\end{cases}
\end{equation}
where $\Omega$ is a bounded open subset of $R^N(N>1)$ with a Lipschitz boundary $\partial \Omega$. And we define a continuous real-valued function $p$:
	\[
	p:\bar{\Omega}\rightarrow [1,+\infty)
	\]
Let
	\[
	p^-=\min_{x\in\bar{\Omega}}p(x)
	\]
and 
	\[
	p^+=\max_{x\in\bar{\Omega}}p(x)
	\]
restricted by $1<p^-\leqslant p^+<+\infty$ naturally. The operator $-\nabla \cdot(A(x,t,u,\nabla u))$, which works on \(t\), defined in \eqref{beginequation}, is a Leray-Lions operator. That means the operator is coercive and grows like $|\nabla u|^{p(x)-1}$ with respect to $\nabla u$. The cylinder $Q_T=\Omega \times (0,T)$ is defined in the usual way for $T>0$, and the function $H$ is a \textbf{ Carathéodory function}. Finally, $f\in L^1(Q_T)$ and $b(u_0)\in L^1(\Omega)$.

The existence of a renormalized solution to equation \eqref{beginequation} intrigues us. But the difficulties of this problem are due to the lack of coercivity and due to the $L^1$ data.
To overcome the difficulties, R.J.Dipenna and P-L.Lions \cite{DiPerna1989} introduced renormalized solutions in the study of Boltzmann equation, and notion of renormalized solutions is extended to more general problems of parabolic \cite{Bendahmane2010} \cite{Blanchard1998}.

Many important results of equations with variable exponents were found in the last two decades \cite{Zhikov1987} \cite{Zhikov1992} \cite{Fan1996}. For example,by using $p(x)$-growth condition, the weak solutions of elliptic equations are Hölder continuous and its gradient have high integrability proved by Fan and Chao \cite{Fan1996}. In particular, the Lebesgue space with variable exponents was developed by Zhikov indicating that smooth truncation are not dense in $L^{p(x)}$ and $W^{k,p(x)}$ \cite{Zhikov2006}.

If $H\equiv 0$,$b(u)=u$ and $f\in L^1$, the existence of weak solutions was proved by Lucio Boccardo \cite{Boccardo1997} for $p>2-\frac{1}{N+1}$. Furthermore, the existence of renormalized solution and entropy solution for a nonlinear parabolic function with variable exponent was proved by Chao Zhang and Shulin Zhou \cite{Zhang2010}. For special case $ b(u)=u$, $H\equiv 0$ and which another lower order term is $\varPhi$, problem \eqref{beginequation} was studied by Rosaris Di Nardo \cite{DiNardo2010}(also see \cite{Blanchard2001}).M.Ben proved that nonlinear elliptic problem in performed domains exist at least a renormalied solution.

In the frame of variable exponents, Hamdaoui and Zhongqing Li obtained existence results of renormalied solutions in different problems \cite{ElHamdaoui2020} \cite{Li2022}. Moreover, if the right hand side is a bounded measure $\mu$ and p is a constant,the existence of a renormalized solution for \eqref{beginequation} was introduced \cite{Bouajaja2021}. Finally, the existence and uniquess of renormalized solution for nonlinear parabolic problem with two lower order terms was obtained,where $b(u)$ is equal to $u$ and p is a constant \cite{Abdellaoui2021} \cite{DiNardo2013} \cite{DiNardo2011}.

The main inspiration for this paper comes from \cite{DiNardo2011} and the main technique come from \cite{Blanchard2001} and \cite{Abdellaoui2021}. It is our purpose in this paper to prove an existence result of renormalied solution of \eqref{beginequation}. The paper is organized as follows.In section 2, some important definitions and properties are given and an important lemma is introduced to deal with lower order term $H$. In section 3, the definition of a renormalized solution is given and main result is proved in this paper which is the existence of a renormalized solution.

\section{Assumptions and Definitions of a Renormlized Solutions}

Throughout the paper, we assume that the following assumptions hold.
\begin{enumerate}
	\item $p(x)$ is a continuous function on $\bar{\Omega}$,
	$p^+=\max_{x\in\bar{\Omega}} p(x)$ and $p^-=\min_{x\in\bar{\Omega}}p(x)$. Here is a restriction $1<p^- \leq p(x) \leq p^+ < +\infty$. Moreover, there exist a constant $C>0$ such that 
	\begin{equation}
	|p(x) - p(y)|\leq -\dfrac{C}{\log|x-y|} \,\, \text{for every} \,\, x,y \in \Omega \,\, \text{satisfying} \,\, |x-y|\leq \frac{1}{2},      
	\end{equation}
	which is called the log-Hölder continuity condition.
	\item $b: R \rightarrow R$ is a strictly increasing $C^1$-function with $b(0)=0$, and there exist $b_0>0$ and $b_1>0$ such that
	\begin{equation}\label{as2}
		b_0\leq b'(s) \leq b_1, \,\, \text{for all} \,\, s\in R.
	\end{equation}
	\item $A(x,t,s,\xi) :Q_T\times R\times R^N \rightarrow R^N$ is a Carathéodory function satisfies the following conditions
	\begin{equation}\label{as3.1}
		A(x,t,s,\xi)\cdot\xi \geq \alpha \vert \xi \vert ^{p(x)},   
	\end{equation}
	\begin{equation}\label{as3.2}
		[A(x,t,s,\xi)-A(x,t,s,\xi')][\xi-\xi']>0, 
	\end{equation}
	\begin{equation}\label{as3.3}
		|A(x,t,s,\xi)| \leq C(L(x,t)+|\xi|^{p(x)-1}),
	\end{equation}
	where $C>0$ and $L(x,t)\in L^{p'(x)}(Q_T)$.
	\item $f(x,t) \in L^1(Q_T),u_0(x) \in L^1({\Omega})$.\\
	\item $H(x,t,\xi):Q_T\times R^N\rightarrow R $ satisfies the Carathéodory condition and
	\begin{equation}\label{as5.1}
		|H(x,t,\xi)| \leq g(x,t) |\xi|^{\delta(x)}
	\end{equation}
	with $g(x,t) \in L^{N+2,1}(Q_T)$ and 
	\begin{equation}\label{as5.2}
		\delta (x)=\dfrac{(N+1)p(x)-N}{(N+2)(p(x)-1)}(p^--1).
	\end{equation}
\end{enumerate}

Firstly, the Lorentz space $L^{r,1}(Q_T)$ and $L^{q,\infty}(Q_T)$ are explained in \cite{Baroni2013}. If $f^*$ denotes the decreasing rearrangenment of a measurable functions $f$,
\begin{equation}
	f^*(r)= \inf\{s\geq0:meas{(x,t)\in Q_T:|f(x,t)|>0 }<r\},r\in[0,|Q_T|].
	\nonumber
\end{equation}
$L^{q,1}(Q_T)$ and $L^{q,\infty}(Q_T)$ are the spaces of Lebesgue measurable functions such that 
\[
\|f\|_{L^{q,1}(Q_T)}=\left(\int_0^{|Q_T|}{f^*(r)r^{\frac{1}{q}}\frac{\mathrm{d}r}{r}}\right)<+\infty,
\]
\[
\| f \| _{L^{q,\infty}(Q_T)}=\sup_{r>0}r[meas\{(x,t)\in Q_T:|f(x,t)|>r\}]^{\frac{1}{q}}<+\infty.
\]
For $1<q<+\infty $, we have the Hölder inequality
\begin{equation*}
	\int_{Q_T}{ \left\lvert fg \right\rvert }\leqslant \left\lVert f \right\rVert _{L^{q,\infty}(Q_T)} |g|_{L^{q',1}(Q_T)}, \,\, \forall f \in L^{q,\infty}(Q_T), \forall g\in L^{q',1}(Q_T)
\end{equation*}
Thoughout this paper,the variable exponent Lebesgue space $L^{p(x)}(\Omega)$ is defined as
\begin{equation*}
	L^{p(x)}(\Omega)=\left\{u: u \text{ is a measurable function such that} \int_{\Omega}{|u|}^{p(x)}\mathrm{d}x < +\infty 
	\right\}
\end{equation*}
endowed with the norm
\begin{equation*}
	\|u\|_{L^{p(x)}(\Omega)}=\inf\left\{\lambda >0:\int_{\Omega}{\left|\frac{u}{\lambda} \right|}^{p(x)}\mathrm{d}x \leq 1 \right\}.
\end{equation*}
As a matter of fact
\begin{equation*}
	\min \left\{\|u\|_{L^{p(x)}\left(\Omega\right)}^{p^-},\|u\|_{L^{p(x)}(\Omega)}^{p^+} \right\} \leq \int_{\Omega}{\left|u\right|^{p(x)}}\mathrm{d}x \leq \max\left\{\|u\|_{L^{p(x)}(\Omega)}^{p^-},\|u\|_{L^{p(x)}(\Omega)}^{p^+} \right\}.
\end{equation*}
The variable exponent Sobolev space $W^{k,p(x)}(\Omega)$ is defined as
$$ W^{k,p(x)}(\Omega)=\left\{u \in L^{p(x)}(\Omega):D^{\beta} u \in L^{p(x)}(\Omega),|\beta|\leq k \right\} $$
endowed with the norm
$$ \|u\|_{W^{k,p(x)}(\Omega)}=\sum_{\left|\beta\right|\leq k}{\left| D^{\beta}u \right|_{L^{p(x)}(\Omega)}}.$$
Recalling a functional space
$$ V=\left\{v \in L^{p^-}\left([(0,T); W^{1,p(x)}_{0} (\Omega)\right):|\nabla u|\in L^{p(x)}(Q_T) \right\}, $$
which,endowed with the norm
$$
\|v\|_V \coloneqq |\nabla v\| _{L^{p(x)}(Q_T)},
$$
is a separable and reflexive Banach space.
\begin{proposition} 
	~\\
	\begin{enumerate}
		\item For any function $u\in L^{p(x)}(\Omega)$ and $ v\in L^{p'(x)}(\Omega)$,we have
		\begin{equation}
			\left\lvert \int_{\Omega}{uv}\mathrm{d}x\mathrm{d}t \right\rvert\leqslant(\frac{1}{p^-}+\frac{1}{p'^-})\left\lVert u\right\rVert _{L^{p(x)}(\Omega)}\left\lVert v\right\rVert _{L^{p'(x)}(\Omega)}\leqslant2 \left\lVert u\right\rVert _{L^{p(x)}(\Omega)}\left\lVert v\right\rVert _{L^{p'(x)}(\Omega)}.
			\nonumber
		\end{equation}
		\item	Let $p_1(x),p_2(x)$ be continuous functions with $ 1<p_1(x) \leqslant p_2(x)<+\infty$ for any $x\in {\bar{\varOmega}}$, then
		$$L^{p_2(x)}(\Omega)\hookrightarrow L^{p_1(x)}(\Omega).$$
		\item	Let a variable exponent $p(x)$ satisfy the log-Hölder continuity such that 
		$$1<p^-\leqslant p(x) \leqslant p^+<N.$$
		We have
		$$ \forall u\in W_0^{1,p(x)}(\Omega):\left\lVert u \right\rVert _{L^{p^{\ast }(x)}(\Omega)}\leqslant \left\lvert \nabla u\right\rVert _{L^{p(x)}(\Omega)},$$
		where $C=C(N,p^-,p^+)$ and
		$$\frac{1}{p^*(x)}=\frac{1}{N}-\frac{1}{p(x)}$$
		for $p(x)<N$ a.e $x\in \bar{\Omega}$.
	\end{enumerate}
\end{proposition}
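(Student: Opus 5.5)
The three parts of the proposition are standard facts about variable exponent spaces. I would prove each one by reducing it to the modular $\rho_{p}(u)=\int_\Omega |u|^{p(x)}\,\mathrm{d}x$ and to the norm–modular relation recalled just before the statement. In particular, $\|u\|_{L^{p(x)}(\Omega)}=1$ holds if and only if $\rho_p(u)=1$, and more generally $\rho_p(u/\|u\|)\le 1$.

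For part 1, I would use homogeneity. If either norm vanishes there is nothing to prove; otherwise set $\tilde u=u/\|u\|_{L^{p(x)}}$ and $\tilde v=v/\|v\|_{L^{p'(x)}}$. Young's inequality with pointwise exponents $p(x)$ and $p'(x)$ gives
$$|\tilde u\tilde v|\le \frac{|\tilde u|^{p(x)}}{p(x)}+\frac{|\tilde v|^{p'(x)}}{p'(x)}\le \frac{|\tilde u|^{p(x)}}{p^-}+\frac{|\tilde v|^{p'(x)}}{p'^-}.$$
Integrating and using $\rho_p(\tilde u)\le1$ and $\rho_{p'}(\tilde v)\le1$, we obtain $\int|\tilde u\tilde v|\le \frac1{p^-}+\frac1{p'^-}$. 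Multiplying back by the norms gives the first inequality. The bound by $2$ is immediate, since $p^->1$ and $p'^->1$ make each fraction less than $1$.

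For part 2, I would again normalise, taking $\|u\|_{L^{p_2(x)}}=1$. Pointwise we have $|u|^{p_1(x)}\le 1+|u|^{p_2(x)}$: where $|u|\le1$ the left side is at most $1$, and where $|u|>1$ it is at most $|u|^{p_2(x)}$. Integrating gives $\rho_{p_1}(u)\le |\Omega|+1$. Let $\lambda=\max\{1,|\Omega|+1\}$. Since $\lambda\ge1$ and $p_1^-\ge1$, we get $\rho_{p_1}(u/\lambda)\le\lambda^{-1}\rho_{p_1}(u)\le1$, so $\|u\|_{L^{p_1(x)}}\le\lambda$. By homogeneity this yields the continuous embedding with constant $1+|\Omega|$. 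Boundedness of $\Omega$ is essential here.

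Part 3 is the variable exponent Sobolev embedding. As printed, the statement should be read with a constant $C=C(N,p^-,p^+)$ on the right-hand side, and with $\frac1{p^*(x)}=\frac1{p(x)}-\frac1N$; I would prove that corrected form. This part is the main obstacle, because it cannot be obtained by freezing exponents. The route I would follow is the one in the literature (Diening, Harjulehto, Hästö, R\r{u}\v{z}i\v{c}ka):
\begin{enumerate}
\item Use the pointwise Riesz potential bound $|u(x)|\le C\,I_1(|\nabla u|)(x)$, valid for $u\in W^{1,1}_0(\Omega)$ extended by zero.
\item Under the log-Hölder condition, prove boundedness of the Hardy–Littlewood maximal operator on $L^{p(x)}$ (Diening's theorem).
\item Deduce the Sobolev inequality $\|I_1 g\|_{L^{p^*(x)}}\le C\|g\|_{L^{p(x)}}$ via Hedberg's trick. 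One splits $I_1 g(x)$ into the part with $|x-y|<\delta$, bounded by $\delta Mg(x)$, and the far part, bounded using the Hölder inequality of part 1 with a local estimate of $\|\,|x-\cdot|^{1-N}\chi_{\{|x-\cdot|\ge\delta\}}\|_{L^{p'(x)}}$. The log-Hölder condition is exactly what makes $\delta^{-N/p(y)}\approx\delta^{-N/p(x)}$ on small balls. Optimising $\delta$ pointwise gives $|I_1 g(x)|^{p^*(x)}\lesssim (Mg(x))^{p(x)}$ for normalised $g$, plus an integrable error term.
\end{enumerate}
Combining these three steps with the modular–norm relation completes the proof. The delicate point is the maximal function bound. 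In a paper of this kind I would cite it rather than reprove it, since its proof relies on a nontrivial decomposition into the regions where the exponent oscillates little.
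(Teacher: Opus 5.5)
Your proposal is correct, but it does not follow the paper's route, because the paper gives no argument of its own: the proof environment after the Remark is only the citation \cite{ElHamdaoui2020}, which collects these facts from the standard variable-exponent literature.

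Your parts 1 and 2 are self-contained and elementary. In part 1 you apply the pointwise Young inequality with exponents $p(x),p'(x)$ after normalising by the Luxemburg norms. In part 2 you use the pointwise bound $|u|^{p_1(x)}\le 1+|u|^{p_2(x)}$, which gives the explicit embedding constant $1+|\Omega|$. These arguments show exactly what is needed: $p^+<\infty$ so that $p'^->1$, and $|\Omega|<\infty$. Part 2 does not even need continuity of $p_1,p_2$.

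For part 3 you rightly note that the printed statement is defective. The constant $C$ is missing from the inequality, and without it the inequality fails already for constant exponents close to $N$. Also, $\frac1N-\frac1{p(x)}<0$ when $p(x)<N$. Your route through the Riesz potential, Diening's maximal theorem and Hedberg's splitting is the standard proof of the corrected statement. Like the paper, it ultimately rests on a citation for the maximal-function bound.

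Two points you should make explicit. First, the constant your argument produces depends on more than $N,p^-,p^+$. It also depends on the log-H\"older constant of $p$ and on $\Omega$, since you must cap $\delta$ at $\operatorname{diam}\Omega$ when $Mg(x)$ is small and then integrate the resulting error over $\Omega$. So the claim $C=C(N,p^-,p^+)$ needs adjusting as well. Second, when invoking Diening's theorem, either use its bounded-domain version, or first extend $p$ from $\bar\Omega$ to a log-H\"older exponent on $\mathbb{R}^N$ with the same bounds $1<p^-\le p^+<N$.
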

\begin{remark}
	Notice that $V\cap L^\infty (Q_T)$, endowed with the norm
	$$
	\left\| v \right\| _{V\cap L^{\infty}\left( Q_T \right)} \coloneqq \max \left\{ \left\| v \right\| _V,\left\| v \right\| _{L^{\infty}\left( Q_T \right)} \right\} .
	$$
	In fact, it is the dual sapce of the Banach space $V^{\ast} +L^1(Q_T)$, endowed with the norm
	$$
	\left\| v \right\| _{V^*+L^1\left( Q_T \right)}:=inf\,\,\left\{ \left\| v_1 \right\| _{V^*}+\left\| v_2 \right\| _{L^1\left( Q_T \right)}:v=v_1+v_2,v_1=V^*,v_2=L^1\left( Q_T \right) \right\} 
	,$$
	where V* is dual of V.
\end{remark}
\begin{proof}
	\cite{ElHamdaoui2020}
\end{proof}
\begin{definition}
	A measurable function $u$ is a renormalized solution to \eqref{beginequation} if
	\begin{equation}
		b(u)\in L^\infty\left([0,T]; L^1(\Omega)\right), 
	\end{equation}
	\begin{equation}\label{deftku}
		T_k(u)\in L^{p^-}\left((0,T) ;W^{1,p(x)}_{0}(\Omega)\right) \text{ for any } k>0,
	\end{equation}
	\begin{equation}
		\lim_{n \rightarrow +\infty}\frac{1}{n}\iint_{\{(x,t)\in Q_T:|u|< n\}}{A(x,t,u,\nabla u)}\cdot\nabla u \, \mathrm{d}x\mathrm{d}t=0
	\end{equation}
	and if for every function S in $W^{2,\infty}(R)$ which is piecewise $C^1$ and such that
	$S'$ has a compact support, there holds
	\begin{equation}\label{def1}
		\begin{split}
			\dfrac{\partial \Theta _S\left( u \right)}{\partial t}-\nabla \cdot \left( S'\left( u \right) A\left( x,t,u,\nabla u \right) \right) +S''\left( u \right) A\left( x,t,u,\nabla u \right) \cdot \nabla u
			\\+H\left( x,t,\nabla u \right) S'\left( u \right) =fS'\left( u \right) \,\,\text{in}\,\,D'\left( Q_T \right) 
		\end{split}
	\end{equation}
	and
	\begin{equation}\label{def1.1}
		\Theta_S(u)(t=0)=\Theta_S(u_0),\quad a.e. \text{in }\Omega,
	\end{equation}
	where $\Theta _S(z)=\int_0^z{b'(s)S'(s)}\,\mathrm{d}s$.
\end{definition}
\begin{remark}
	Equation \eqref{def1} is formally obtained through multiplication of \eqref{beginequation} by $S'(u)$. However, while
	$A(x,t,u,\nabla u) \text{ and } H(x,t,\nabla u)$ does not in general make sense in $D'(Q_T)$, all the terms in \eqref{def1} have a
	meaning in $D'(Q_T).$
\end{remark}
We now claim that
\begin{equation}
	\dfrac{\partial \Theta _S(u)}{\partial t}\in V^{\ast}+L^1(Q_T)
\end{equation}
and
\begin{equation}
	\Theta _S(u)\in V.   
\end{equation}
Indeed, if we assume $M$ is a constant such that $ \operatorname{supp} S'\subset [-M,M]$, we have
\begin{enumerate}
	\item $\left\lvert \Theta _S(u)\right\rvert=\left\lvert \Theta_S(T_M(u))\right\rvert\leqslant b_1M \left\lVert S' \right\rVert _{L^{\infty}(R)}\,\text{ belong to }\,L^{\infty}(Q_T)$.  
	\item Since $S'(u)A(x,t,u,\nabla u)=S'(u)A(x,t,T_M(u),\nabla T_M(u))\,\,a.e\,\text{in }Q_T$,we otain from \eqref{as3.3} and \eqref{deftku} that
	$$S'(u)A(x,t,T_M(u),\nabla T_M(u))\in \left(L^{p'(x)}(Q_T)\right)^N.$$ 
	\item Since $S''(u)A(x,t,u,\nabla u)\nabla u=S''(u)A(x,t,T_M(u),\nabla T_M(u))\nabla T_M(u)$,
	we have $$S''(u)A(x,t,T_M(u),\nabla T_M(u))\nabla T_M(u)\in L^1(Q_T).$$ 
	\item Since $S'(u)H(x,t,\nabla u)=S'(u)H(x,t,\nabla T_M(u))\,\,a.e\,\text{ in }\,Q_T$, it follows from \eqref{as5.1} and \eqref{deftku} that
	\begin{equation}
		\begin{split}
			&\iint_{Q_T}{\left\lvert S'(u)H(x,t,\nabla T_M(u))\right\rvert}dxdt \\ 
			\leqslant & C \left\lVert S' \right\rVert _{L^{\infty}(R)}\left\lVert g\right\rVert _{L^{\frac{p(x)(N+2)}{N+p(x)}}(Q_T)}\left\lVert \left\lvert \nabla T_M (u)\right\rvert ^{\delta (x)} \right\rVert _{L^{\frac{(N+2)p(x)}{N(p(x)-1)+p(x)}}}\\ 
			\leqslant & C\left\lVert S'\right\rVert _{L^{\infty}(R)}\left\lVert g \right\rVert _{L^{N+2}(Q_T)}\left\lVert \nabla T_M(u)\right\rVert^{\eta }_{L^{p(x)}(Q_T)}\leqslant C,  
		\end{split}
		\nonumber
	\end{equation}
	where
	\begin{equation}
		\eta=
		\begin{cases}
			\dfrac{p^-}{l^-},&if\,\left\lVert \nabla T_M(u)\right\rVert _{L^{p'(x)(p^--1)}(Q_T)}\geqslant 1 \\
			\dfrac{(p^+)'(p^--1)}{l^+},&if \,\left\lVert \nabla T_M(u)\right\rVert _{L^{p'(x)(p^--1)}(Q_T)}\leqslant 1
		\end{cases},
		l(x)=\frac{(N+2)p(x)}{N(p(x)-1)+p(x)}.
		\nonumber
	\end{equation}
	Then we have $S'(u)H(x,t,\nabla T_M(u))\in L^1(Q_T)$.
\end{enumerate}
The above considerations show that \eqref{def1} holds in $D'(Q_T)$ and that
$$\frac{\partial \Theta _S(u)}{\partial t}\in V^{\ast}+L^1(Q_T).$$
The assumption of $\Theta _S(u)$ implies that
$$\left\lvert \nabla \Theta _S(u)\right\rvert \leqslant b_1\left\lVert S'\right\rVert _{L^{\infty}(R)}\left\lvert \nabla T_M(u)\right\rvert  $$
and
$$\Theta _S(u)\in L^{p^-}(0,T;W_0^{1,p(x)}(\Omega)).$$
\noindent Then we deduce that $\Theta _S(u)\in C((0,T);L^1(\Omega))$, so that the initial condition \eqref{def1.1} makes sense.
\begin{definition}
	We denote a class of measurable functions $p:R^N\rightarrow ( 0,\infty]$ such that $0<p^-\leqslant p(x)\leqslant  p^+< +\infty$ by $\mathcal{P}$ . Let $ p(x)\in  \mathcal{P}(R^N) $ 
	and let $0<q\leqslant +\infty$. Then $L^{p(x),q}(R^N)$ is the collection of all measurable functions $f:R^N\rightarrow \mathbb{C} $ such that
	\begin{equation}
		\left\| f \right\| _{L^{p\left( x \right) ,q}\left( R^N \right)}:=
		\begin{cases}
			\left( \displaystyle \int_0^{+\infty}{\lambda ^q\left\| \chi _{\left\{ x\in R^N:\left| f \right|>\lambda \right\}} \right\| _{L^{p\left( x \right)}\left( R^N \right)}^{q}\frac{\mathrm{d}\lambda}{\lambda}} \right) ^{\frac{1}{q}},&\text{if}\,\,0<q<+\infty\\
			\,\,\displaystyle \sup _{\lambda >0}\lambda \left\| \chi _{\left\{ x\in R^N:\left| f\left( x \right) \right|>\lambda \right\}} \right\| _{L^{p\left( x \right)}\left( R^N \right)},&\text{if}\,\,q=\infty
		\end{cases} 
		\nonumber           
	\end{equation}
	is finite\text{\cite{Jiao2019}}.
\end{definition}
\begin{lemma}\label{lemma1}\cite{Bendahmane2010}
	Let $V^{\ast}$ be dual space of V. Then we have the following continuous dense embedding:
	\begin{equation}
		L^{p^+}(0,T;W_0^{1,p(x)}(\Omega))\hookrightarrow  V \hookrightarrow L^{p^-}(0,T;W_0^{1,p(x)}(\Omega)),
	\end{equation}
	in particular,since $D'(Q_T)$ is dense in $L^{p^+}(0,T;W_0^{1,p(x)}(\Omega))$,it is dense in V and
	for the corresponding dual space.We have
	\begin{equation}
		L^{(p^-)'}(0,T;W_0^{-1,p'(x)}(\Omega))\hookrightarrow  V^{\ast}\hookrightarrow L^{(p^+)'}(0,T;W_0^{-1,p'(x)}(\Omega)).
	\end{equation}
\end{lemma}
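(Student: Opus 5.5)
The plan is to derive both chains of embeddings directly from a comparison of the space--time modular $\iint_{Q_T}|\nabla v|^{p(x)}\,dx\,dt$ with the Bochner norms, and then obtain the dual chain by transposition. Throughout I use the norm $\|v\|_V=\|\nabla v\|_{L^{p(x)}(Q_T)}$, and on $W_0^{1,p(x)}(\Omega)$ the equivalent norm $\|\nabla w\|_{L^{p(x)}(\Omega)}$, which is available by the Poincar\'e inequality implied by the log-H\"older continuity.

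\emph{Left embedding.} Take $v\in L^{p^+}(0,T;W_0^{1,p(x)}(\Omega))$. For a.e. $t$, the modular bound recalled before Proposition 1 gives
\[
\int_\Omega|\nabla v(t)|^{p(x)}dx\le \max\{\|\nabla v(t)\|^{p^-},\|\nabla v(t)\|^{p^+}\}\le 1+\|\nabla v(t)\|^{p^+}_{L^{p(x)}(\Omega)}.
\]
Integrating in $t$ shows that $|\nabla v|\in L^{p(x)}(Q_T)$, so $v\in V$ once the right inclusion (below) places $v$ in $L^{p^-}(0,T;W_0^{1,p(x)})$; in any case $L^{p^+}\subset L^{p^-}$ in time because $T<\infty$. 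For continuity I rescale: if $\|v\|_{L^{p^+}(W)}=\lambda$, apply the bound to $v/\lambda$. This gives $\iint|\nabla v/\lambda|^{p(x)}\le T+1$, and therefore $\|\nabla v\|_{L^{p(x)}(Q_T)}\le (T+1)\lambda$, using that a modular bound $M\ge1$ yields a norm bound $\le M$.

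\emph{Right embedding.} Take $v\in V$ with $\|v\|_V=\lambda$, so that $\iint|\nabla v/\lambda|^{p(x)}\le1$. For a.e. $t$, the lower modular bound gives
\[
\|\nabla v(t)/\lambda\|^{p^-}_{L^{p(x)}(\Omega)}\le 1+\int_\Omega|\nabla v(t)/\lambda|^{p(x)}dx.
\]
Integrating in time yields $\|v\|_{L^{p^-}(W)}\le (T+1)^{1/p^-}\lambda$.

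\emph{Density and duals.} Test functions $C_c^\infty(Q_T)$ are dense in $L^{p^+}(0,T;W_0^{1,p(x)}(\Omega))$. This follows because simple functions $\sum\chi_{I_j}(t)w_j$ are dense in the Bochner space, and $C_c^\infty(\Omega)$ is dense in $W_0^{1,p(x)}(\Omega)$; the latter is where the log-H\"older condition is essential, as Zhikov's counterexample shows. The main obstacle is then density in $V$. Given $v\in V$, I truncate in time, mollify in time, and approximate spatially. Convergence in $V$ is controlled by the modular, and dominated convergence is applied to $|\nabla(v-v_k)|^{p(x)}$, which is legitimate since $p^+<\infty$ makes modular and norm convergence equivalent. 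Once density holds, the dual chain follows by transposition: a dense continuous embedding $X\hookrightarrow Y$ gives $Y^*\hookrightarrow X^*$ injectively. Finally I identify $(L^{q}(0,T;W_0^{1,p(x)}))^*=L^{q'}(0,T;W^{-1,p'(x)})$ using the reflexivity of $W_0^{1,p(x)}(\Omega)$, which holds since $1<p^-\le p^+<\infty$; taking $q=p^-$ and $q=p^+$ produces the stated chain.
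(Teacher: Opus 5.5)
The paper does not prove this lemma; it only cites \cite{Bendahmane2010}. Your proposal is therefore a self-contained argument in place of a citation, and it is correct in substance. Your modular estimates give $\|v\|_V\le (T+1)\|v\|_{L^{p^+}(0,T;W_0^{1,p(x)}(\Omega))}$ and $\|v\|_{L^{p^-}(0,T;W_0^{1,p(x)}(\Omega))}\le (T+1)^{1/p^-}\|v\|_V$. Transposing dense embeddings, together with $(L^q(0,T;X))^*=L^{q'}(0,T;X^*)$ for reflexive $X$, then yields the dual chain. Compared with the bare citation, your version gives explicit constants and shows exactly which density statements the dual chain depends on.

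Two points should be tightened. First, the density of $L^{p^+}(0,T;W_0^{1,p(x)}(\Omega))$ in $V$ is easier than your truncate--mollify--approximate scheme. Moreover, your justification does not literally cover the mollification step: after mollifying in $t$ there is no immediate pointwise majorant of $|\nabla(v-v_k)|^{p(x)}$. You would need a fibrewise argument in $x$, or the maximal function in $t$. Take instead $v_n=v\,\chi_{\{t:\ \|\nabla v(t)\|_{L^{p(x)}(\Omega)}\le n\}}$. It lies in $L^\infty(0,T;W_0^{1,p(x)}(\Omega))$, satisfies $|\nabla(v-v_n)|^{p(x)}\le|\nabla v|^{p(x)}\in L^1(Q_T)$, and $|\nabla(v-v_n)|^{p(x)}\to 0$ a.e. Dominated convergence and $p^+<\infty$ then give $v_n\to v$ in $V$. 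Density of $\mathcal{D}(Q_T)$ in $V$ follows from its density in $L^{p^+}(0,T;W_0^{1,p(x)}(\Omega))$ and the continuity of the left embedding, with no separate spatial approximation.

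Second, injectivity of $L^{(p^-)'}(0,T;W^{-1,p'(x)}(\Omega))\to V^*$ requires $V$ to be dense in $L^{p^-}(0,T;W_0^{1,p(x)}(\Omega))$. This holds because $\mathcal{D}(Q_T)\subset V$ and your simple-function argument works verbatim in $L^{p^-}$, but you should state it.

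A minor remark: if $W_0^{1,p(x)}(\Omega)$ is defined as the closure of $C_c^\infty(\Omega)$, then density of $C_c^\infty(\Omega)$ in it is tautological. Zhikov's example and the log-H\"older condition concern identifying this closure with other notions of zero boundary values, not this step.
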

\begin{lemma}\label{lemma2}\cite{Kempka2014}
	Let $p_1,p_2 $ be  continuous functions in $\bar{\Omega}$  with $1<p_1(x)\leqslant p_2(x)<+\infty$ for any $x\in \bar{\varOmega}$. Then 
	$$L^{p_2(x),\infty}(\Omega)\hookrightarrow L^{p_1(x),\infty}(\Omega).$$
\end{lemma}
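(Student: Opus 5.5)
The plan is to prove Lemma \ref{lemma2} directly from the definition of the variable exponent weak Lorentz norm (the case $q=\infty$ of the definition above, with $R^N$ replaced by $\Omega$):
\[
\|f\|_{L^{p(x),\infty}(\Omega)}=\sup_{\lambda>0}\lambda\,\|\chi_{\{|f|>\lambda\}}\|_{L^{p(x)}(\Omega)} .
\]
This reduces the claim to a uniform comparison between $\|\chi_E\|_{L^{p_1(x)}(\Omega)}$ and $\|\chi_E\|_{L^{p_2(x)}(\Omega)}$ over measurable sets $E\subset\Omega$. Concretely, I will show there is a constant $C=C(p_1,p_2,|\Omega|)$ such that
\[
\|\chi_E\|_{L^{p_1(x)}(\Omega)}\leqslant C\,\|\chi_E\|_{L^{p_2(x)}(\Omega)}\quad\text{for every measurable } E\subset\Omega .
\]
Multiplying by $\lambda$, taking $E=\{|f|>\lambda\}$ and then the supremum over $\lambda>0$ gives $\|f\|_{L^{p_1(x),\infty}}\leqslant C\|f\|_{L^{p_2(x),\infty}}$, which is the continuous embedding.

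For the set estimate, the quickest route is item 2 of the Proposition, the embedding $L^{p_2(x)}(\Omega)\hookrightarrow L^{p_1(x)}(\Omega)$ on a bounded domain. Applied to $u=\chi_E$, it gives the inequality with the embedding constant. To make this self-contained, I would recall where that constant comes from. Define $r(x)$ by $\frac1{p_1(x)}=\frac1{p_2(x)}+\frac1{r(x)}$; then $r(x)\in(1,\infty]$ and the constant function $1$ lies in $L^{r(x)}(\Omega)$ because $|\Omega|<\infty$. The generalized Hölder inequality $\|uv\|_{L^{p_1(x)}}\leqslant C\|u\|_{L^{p_2(x)}}\|v\|_{L^{r(x)}}$ with $v\equiv1$ then yields the embedding.

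If one prefers to avoid the variable Hölder inequality on the set where $p_1=p_2$, where $r=\infty$, an alternative is to compute directly with modulars. Set $\mu=\|\chi_E\|_{L^{p_2(x)}}$. If $\mu\geqslant1$, then $\int_E \mu^{-p_1(x)}\,\mathrm{d}x\leqslant|E|\leqslant|\Omega|$, so $\|\chi_E\|_{L^{p_1(x)}}\leqslant \max(1,|\Omega|)\,\mu$. If $\mu<1$, then $\mu^{-p_1(x)}\leqslant\mu^{-p_2(x)}$ pointwise, so $\int_E\mu^{-p_1(x)}\,\mathrm{d}x\leqslant\int_E\mu^{-p_2(x)}\,\mathrm{d}x\leqslant1$, and hence $\|\chi_E\|_{L^{p_1(x)}}\leqslant\mu$. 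The second case is the key observation: the base $\mu^{-1}$ exceeds $1$ and $p_1\leqslant p_2$. Together the two cases give $C=\max(1,|\Omega|)$.

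The main obstacle is only bookkeeping. One must check that the modular inequality $\int_\Omega|u/\lambda|^{p(x)}\,\mathrm{d}x\leqslant1$ characterizes the Luxemburg norm; this holds by the definition of the norm as an infimum together with lower semicontinuity of the modular. One must also handle the case $\|\chi_E\|_{L^{p_2(x)}}\geqslant1$ correctly, which requires the finiteness of $|\Omega|$; boundedness of $\Omega$ is exactly what makes the embedding go in this direction. Continuity of $p_1$ and $p_2$ is not essentially needed beyond measurability and $1<p_1\leqslant p_2\leqslant p_2^+<\infty$.
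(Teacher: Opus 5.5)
Your argument is correct. It differs from the paper only in that the paper gives no proof of this lemma at all: it cites \cite{Kempka2014}, where the embedding comes out of a general theory of Lorentz spaces with variable exponents.

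Your approach reduces the claim to level sets, that is, to the uniform estimate $\|\chi_E\|_{L^{p_1(x)}(\Omega)}\leqslant C\|\chi_E\|_{L^{p_2(x)}(\Omega)}$, and then proves that estimate by a two-case modular computation. This is the natural elementary proof, and it gives the explicit constant $C=\max\{1,|\Omega|\}$.

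Two small points are left implicit but are harmless:
\begin{itemize}
\item The case $\mu=0$: then $|E|=0$ and there is nothing to prove.
\item In the case $\mu\geqslant1$, the step from $\int_E\mu^{-p_1(x)}\,\mathrm{d}x\leqslant|\Omega|$ to the norm bound. With $c=\max\{1,|\Omega|\}\geqslant1$, this uses $c^{-p_1(x)}\leqslant c^{-1}$, which holds since $p_1\geqslant1$.
\end{itemize}

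Compared with the citation, your version is transparent and more general. As you note, it needs only measurable exponents and a set of finite measure. So it applies verbatim on $Q_T$ and $Q_{t_1}$ with exponents depending on $x$ only. That is exactly where the paper actually uses the lemma, in the proof of Lemma \ref{lemma5} and in Step 2, even though the lemma is stated only for $\Omega$.
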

\begin{lemma}\label{lemma3}
	Let $p(x)$ be a continuous function with $1<p^-\leqslant p(x)\leqslant p^+<+\infty$ and $s\in(0,+\infty)$.If $f\in L^{sp(x),\infty}(\Omega)$,then 
	$\left\lvert f\right\rvert^{s}\in L^{p(x),\infty}(\Omega) $ with the estimate
	\begin{equation}\label{lemmaexhnge}
		\left\| f \right\| _{L^{sp\left( x \right) ,\infty}\left( \Omega \right)}^{s}=\left\| \left| f \right|^s \right\| _{L^{p\left( x \right) ,\infty}\left( \Omega \right)}.
	\end{equation}
\end{lemma}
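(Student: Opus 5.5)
The proof works directly from the definition of the variable exponent weak Lorentz quasi-norm, recalled above,
\[
\|f\|_{L^{p(x),\infty}(\Omega)}=\sup_{\lambda>0}\lambda\,\bigl\|\chi_{\{|f|>\lambda\}}\bigr\|_{L^{p(x)}(\Omega)} .
\]
The strategy has two parts. First, compute the Luxemburg norm of a characteristic function with exponent $sp(x)$ and compare it with the one for exponent $p(x)$. Second, reparametrize the level $\lambda$ by $\mu=\lambda^s$. No embedding lemma is needed; the argument is a scaling identity.

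\textbf{Step 1 (norms of characteristic functions).} For a measurable $E\subset\Omega$ and $\tau>0$, we have $\int_\Omega|\chi_E/\tau|^{sp(x)}dx=\int_\Omega|\chi_E/\tau^{s}|^{p(x)}dx$, because on $E$ both integrands equal $\tau^{-sp(x)}$ and off $E$ both vanish. Hence $\tau$ is admissible in the infimum defining $\|\chi_E\|_{L^{sp(x)}}$ if and only if $\tau^s$ is admissible in the infimum defining $\|\chi_E\|_{L^{p(x)}}$. Since $\tau\mapsto\tau^s$ is an increasing bijection of $(0,\infty)$, taking infima gives
\[
\|\chi_E\|_{L^{sp(x)}(\Omega)}^{s}=\|\chi_E\|_{L^{p(x)}(\Omega)}.
\]
This is the key observation, and the rest is bookkeeping.

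\textbf{Step 2 (level sets).} For $\lambda>0$ we have $\{|f|>\lambda\}=\{|f|^s>\lambda^s\}$. Combining this with Step 1,
\[
\Bigl(\lambda\,\|\chi_{\{|f|>\lambda\}}\|_{L^{sp(x)}}\Bigr)^{s}=\lambda^s\,\|\chi_{\{|f|^s>\lambda^s\}}\|_{L^{p(x)}} .
\]
Taking the supremum over $\lambda>0$, and using that $t\mapsto t^s$ is increasing and continuous on $[0,\infty]$ (so it commutes with suprema), the left side becomes $\|f\|^s_{L^{sp(x),\infty}}$. On the right, substitute $\mu=\lambda^s$, which ranges over all of $(0,\infty)$; the right side then becomes $\sup_{\mu>0}\mu\|\chi_{\{|f|^s>\mu\}}\|_{L^{p(x)}}=\||f|^s\|_{L^{p(x),\infty}}$. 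This proves \eqref{lemmaexhnge}. In particular, if $f\in L^{sp(x),\infty}(\Omega)$ the right side is finite, so $|f|^s\in L^{p(x),\infty}(\Omega)$.

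\textbf{Remaining check.} The only point needing care is that $sp(x)$ may dip below $1$ when $s<1$, so that $L^{sp(x)}$ is only quasi-normed. Step 1 uses only the modular and the infimum definition, both of which make sense for any positive exponent, so this causes no difficulty. I would add a remark that the hypothesis $p^->1$ is not actually needed in the argument.
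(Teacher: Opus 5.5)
Your proof is correct and is essentially the paper's argument. The paper also rewrites the modular $\int_\Omega|\chi_{\{|f|>\lambda\}}/\mu|^{sp(x)}\,dx$ as $\int_\Omega|\chi_{\{|f|>\lambda\}}/\mu^{s}|^{p(x)}\,dx$ and then substitutes $\lambda=\tau^{1/s}$, $\mu=h^{1/s}$ to pull out the power $1/s$. Splitting this into the identity $\|\chi_E\|_{L^{sp(x)}}^{s}=\|\chi_E\|_{L^{p(x)}}$ and a reparametrization of the levels is just cleaner bookkeeping of the same scaling, and it avoids the paper's slip of writing $\sup_{h>0}$ where the Luxemburg infimum belongs.
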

\begin{proof}
	By definition \eqref{as2}, we have
	\begin{equation}
		\begin{split}
			&\left\lVert f\right\rVert _{L^{sp(x),\infty}(\Omega)}
			\\= & \sup_{\lambda >0}\,\lambda \left\lVert \chi _{\left\{x \in \Omega:\left\lvert f \right\rvert > \lambda \right\}} \right\rVert _{L^{sp(x)}(\Omega)}
			\\= & \sup_{\lambda >0}\,\lambda \,\left\{\inf_{\mu >0}\left\{\mu:\int_{\Omega}\left\lvert \frac{\chi_{\left\{x\in \Omega:\left\lvert f(x)\right\rvert>\lambda  \right\} }}{\mu}\right\rvert^{sp(x)}dx\leqslant 1 \right\} \right\}                
			\\= & \sup_{\lambda >0}\,\lambda \,\left\{\inf_{\mu >0}\left\{\mu:\int_{\Omega}\left\lvert \frac{\chi_{\left\{x\in \Omega:\left\lvert f(x)\right\rvert>\lambda  \right\} }}{\mu^s}\right\rvert^{p(x)}dx\leqslant 1 \right\} \right\}.                
		\end{split} 
		\nonumber
	\end{equation}
	Taking $\lambda =\tau ^{\frac{1}{s}}$ and $\mu=h^{\frac{1}{s}}$, the above equality can be read as
	\begin{equation}
		\begin{split}
			&\left\lVert f\right\rVert _{L^{sp(x),\infty}(\Omega)} \\
			=&\sup_{\tau >0}\,\tau^{\frac{1}{s}}\left\{\sup_{h>0}\left\{h^{\frac{1}{s}}:\int_{\Omega} \left\lvert 
			\frac{ \chi _{\left\{\left\lvert f(x)\right\rvert^s > \tau\right\} }}{h}\right\rvert^{p(x)}dx \leqslant 1 \right\} \right\} \\
			=&\left[\sup_{\tau >0}\,\tau\left\{\sup_{h>0}\left\{h:\int_{\Omega} \left\lvert 
			\frac{ \chi _{\left\{\left\lvert f(x)\right\rvert^s > \tau\right\} }}{h}\right\rvert^{p(x)}dx \leqslant 1 \right\} \right\}\right]^{\frac{1}{s}} \\
			=&\left\lVert \left\lvert f\right\rvert ^s\right\rVert ^{\frac{1}{s}}_{L^{p(x),\infty}(\Omega)}. 
		\end{split}
		\nonumber
	\end{equation}
	It follows that $\left\lvert f\right\rvert ^s\in L^{p(x),\infty}(\Omega)$ and \eqref{lemmaexhnge} holds true.
\end{proof}
\noindent Similarly, we have following lemma.
\begin{lemma}\label{lemma4}
	Let $p(x)$ be a continuous function satisfies $1<p^-\leqslant p(x)\leqslant p^+<+\infty$ and $s\in(0,+\infty)$. If $\left\lvert f\right\rvert^s \in L^{p(x),\infty}(\Omega)$, then 
	$\left\lvert f\right\rvert\in L^{sp(x),\infty}(\Omega) $ with the estimate
	\begin{equation}
		\left\| \left\lvert f \right\rvert^s \right\| _{L^{p\left( x \right) ,\infty}\left( \Omega \right)}=\left\| \left| f \right| \right\|^s _{L^{sp\left( x \right) ,\infty}\left( \Omega \right)}.
		\label{(2.15')}
	\end{equation}
\end{lemma}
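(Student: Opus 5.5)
The plan is to argue directly from the definition of the weak variable-exponent Lorentz quasi-norm, as in the proof of Lemma \ref{lemma3}, but read in the opposite direction. Start from the hypothesis $\left\lvert f\right\rvert^s\in L^{p(x),\infty}(\Omega)$ and write
\[
\left\| |f|^s\right\|_{L^{p(x),\infty}(\Omega)}=\sup_{\tau>0}\tau\left\|\chi_{\{x\in\Omega:|f(x)|^s>\tau\}}\right\|_{L^{p(x)}(\Omega)} .
\]
Two elementary identities then do all the work. First, for every $\tau>0$ the level sets coincide: $\{|f|^s>\tau\}=\{|f|>\tau^{1/s}\}$. Second, for any measurable set $E$ one has $\|\chi_E\|_{L^{sp(x)}(\Omega)}=\|\chi_E\|_{L^{p(x)}(\Omega)}^{1/s}$. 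The second identity holds because $\chi_E^{sp(x)}/\mu^{sp(x)}=\chi_E^{p(x)}/(\mu^s)^{p(x)}$, so the modular condition defining the Luxemburg norm with parameter $\mu$ for the exponent $sp(x)$ is exactly the condition with parameter $h=\mu^s$ for the exponent $p(x)$. Taking the infimum over $\mu$ is therefore the same as taking the infimum over $h$ and then applying $h\mapsto h^{1/s}$, which is an increasing bijection of $(0,\infty)$ and hence commutes with the infimum.

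Next, substitute $\tau=\lambda^s$, which is also an increasing bijection of $(0,\infty)$. Since $\tau$ and $\|\chi\|_{L^{p(x)}}$ are nonnegative,
\[
\tau\left\|\chi_{\{|f|^s>\tau\}}\right\|_{L^{p(x)}}=\left(\lambda\left\|\chi_{\{|f|>\lambda\}}\right\|_{L^{sp(x)}}\right)^{s}.
\]
Taking the supremum over $\lambda>0$, and using that $y\mapsto y^s$ is increasing and continuous on $[0,\infty]$ so that it commutes with the supremum, gives $\left\| |f|^s\right\|_{L^{p(x),\infty}}=\|f\|_{L^{sp(x),\infty}}^s$. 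Finiteness of the left-hand side then yields $|f|\in L^{sp(x),\infty}(\Omega)$.

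The only point needing care is the one where the proof of Lemma \ref{lemma3} is sloppy: an infimum appears to turn into a supremum there. I will state explicitly that monotone bijections commute with infima and suprema, and that the exponent $sp(x)$ still satisfies $0<sp^-\le sp(x)\le sp^+<\infty$. This places it in the class $\mathcal{P}$, so the Luxemburg norm makes sense even when $sp^-\le1$. No other difficulty arises, since the argument is the computation of Lemma \ref{lemma3} read backwards.
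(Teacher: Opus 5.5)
Your proposal is correct and follows essentially the paper's route: the paper proves this lemma ``similarly'' to Lemma \ref{lemma3}, that is, by the same direct computation from the definition with the substitutions $\lambda=\tau^{1/s}$, $\mu=h^{1/s}$. That is exactly your level-set identity plus Luxemburg-norm scaling, run in the other direction. Your explicit remark that the infimum must remain an infimum corrects the slip to $\sup_{h>0}$ in the proof of Lemma \ref{lemma3}, and your check that $sp(x)$ lies in $\mathcal{P}$ is also worth keeping.
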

\begin{lemma}\label{lemma5}
	Let $s(x), p(x) $ be two continuous functions in $\bar{\Omega}$ with $1<s(x)p(x)<\infty$ for any $x \in \bar{\Omega}$. Then for all $f\in L^{s^+p(x),\infty}(Q_T)$, it holds true that
	\begin{equation}
		\left\| \left| f \right|^{s\left( x \right)} \right\| _{L^{p\left( x \right) ,\infty}\left( Q_T \right)}\leqslant C\left( \left\| f \right\| _{L^{s^+p\left( x \right) ,\infty}\left( Q_T \right)}^{s^+}+1\right), 
		\label{(2.16)}
	\end{equation}
	where $C$ is a positive constant.
	\begin{proof}
		Set $\chi $ is a characteristic function in $Q_T$. Then
		\begin{equation}
			\begin{split}
				&\left\lVert \left\lvert f\right\rvert^{s(x)} \right\rVert _{L^{p(x),\infty}(Q_T)} \\
				=&\left\lVert \chi_{\left\{(x,t)\in Q_T:\left\lvert f\right\rvert >1 \right\} }\left\lvert f\right\rvert^{s(x)}+
				\chi_{\left\{(x,t)\in Q_T:\left\lvert f\right\rvert \leqslant 1\right\}  }\left\lvert f\right\rvert^{s(x)}\right\rVert _{L^{p(x),\infty}(Q_T)}\\
				\leqslant& C\left\lVert \chi_{\left\{(x,t)\in Q_T:\left\lvert f\right\rvert >1 \right\} }\left\lvert f\right\rvert^{s(x)}\right\rVert _{L^{p(x),\infty}(Q_T)} 
				+C \left\lVert \chi_{\left\{(x,t)\in Q_T:\left\lvert f\right\rvert \leqslant 1\right\}  }\left\lvert f\right\rvert^{s(x)}\right\rVert _{L^{p(x),\infty}(Q_T)}\\
				=&C(A_1+A_2).\nonumber
			\end{split}
		\end{equation}
		Firstly we handle $A_1$,and it follows from lemma \eqref{lemma3} that
		\begin{equation}
			\begin{split}
				A_1\leqslant\left\| \left\lvert f \right\rvert^{s^+} \right\| _{L^{p(x),\infty}(Q_T)}=\left\| f\right\|^{s^+}_{L^{s^+p(x),\infty}(Q_T).\nonumber} 
			\end{split}
		\end{equation}
		Similarly ,using lemma\eqref{lemma2}and lemma\eqref{lemma3},we have 
		\begin{equation}
			\begin{split}
				A_2 &\leqslant \left\lVert \left\lvert f\right\rvert^{s^-} \right\rVert _{L^{p(x),\infty}(Q_T)}
				\\&= \left\lVert  f \right\rVert ^{s^-}_{L^{s^-p(x),\infty}(Q_T)}
				\\&\leqslant\left(C\left\lVert f\right\rVert _{L^{s^+p(x),\infty}(Q_T)}+1\right)^{s^-}
				\\&\leqslant C\left(\left\lVert f\right\rVert _{L^{s^+p(x),\infty}(Q_T)}+1\right)^{s^+}
				\\&\leqslant C\left(\left\lVert f\right\rVert^{s^+} _{L^{s^+p(x),\infty}(Q_T)}+1\right).\nonumber
			\end{split}
		\end{equation} 
		Therefore,we get 
		\begin{equation}
			\left\lVert \left\lvert f\right\rvert^{s(x)} \right\rVert _{L^{p(x),\infty}(Q_T)} \leqslant C\left(\left\lVert f\right\rVert^{s^+} _{L^{s^+p(x),\infty}(Q_T)}+1\right),\nonumber
		\end{equation}
		where C is a positive constant.
	\end{proof}
\end{lemma}
In order to deal with lower order term $H$, We provide two estimates in Lorentz spaces.
\begin{lemma}\label{lemma6}\cite{DiNardo2011} Assume that $Q_T=\Omega \times (0,T)$ with $\Omega $ open subset of $R^N$ of finite measure and $p>1$. Let be $u$ a measurable function satisfying
	$$T_k(u)\in L^{\infty}(0,T;L^2(\Omega))\cap L^p(0,T;W_0^{1,p}(\Omega))\,\text{ for every }\,k>0$$
and such that
	\begin{equation}
		\underset{t\in(0,T)}{sup}\int_{\Omega}{ \left\lvert T_k(u(t))\right\rvert }^2+\iint_{Q_T}\left\lvert \nabla T_k(u)\right\rvert ^p\leqslant kM+L, 
	\end{equation} 
	where $M$ and $L$ are constants.
	Then $$u^{\frac{N(p-1)+p}{N+p}}\in L^{\frac{N+p}{N},\infty}(Q_T)\,\text{ and }\,\left\lvert \nabla u\right\rvert ^{\frac{N(p-1)+p}{N+2}}\in L^{\frac{N+2}{N+1},\infty}(Q_T)$$ 
hold. And we also get
	\begin{equation}
		\left\lVert \left\lvert u\right\rvert^{\frac{N(p-1)+p}{N+p}} \right\rVert _{L^{\frac{N+p}{p},\infty}(Q_T)}
		\leqslant C[M+\left\lvert Q_T\right\rvert^{\frac{Np}{N+2}} L^{\frac{N(p-1)+p}{(N+2)p}}],
	\end{equation}
	\begin{equation}
		\left\lVert \left\lvert \nabla u\right\rvert ^{\frac{N(p-1)+p}{N+2}}\right\rVert _{L^{\frac{N+2}{N+1},\infty}(Q_T)}\leqslant C(N,p)[M+\left\lvert Q_T\right\rvert ^{\frac{N}{(N+2)p}}L^{\frac{N(p-1)+p}{(N+2)p}}],  
	\end{equation}
	where $C(N,p)$ is a constant depending only on $p$ and $N$.
\end{lemma}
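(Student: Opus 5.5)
Lemma~\ref{lemma6} is the classical constant-exponent parabolic Marcinkiewicz estimate of Boccardo--Gallou\"et type. I would prove it by level-set arguments in two stages: first for $u$ itself, then for $\nabla u$.

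\textbf{Step 1: integrability of $T_k(u)$.} Apply the parabolic Gagliardo--Nirenberg inequality to $v=T_k(u)$:
\[
\iint_{Q_T}|v|^{p\frac{N+2}{N}}\leqslant C(N,p)\Big(\iint_{Q_T}|\nabla v|^p\Big)\Big(\sup_{t}\int_\Omega |v|^2\Big)^{p/N}.
\]
Using the hypothesis on both factors gives $\iint|T_k(u)|^{p(N+2)/N}\leqslant C(kM+L)^{1+p/N}$.

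\textbf{Step 2: level sets of $u$.} On $\{|u|>k\}$ we have $|T_k(u)|=k$, so
\[
\operatorname{meas}\{|u|>k\}\leqslant C k^{-p\frac{N+2}{N}}(kM+L)^{\frac{N+p}{N}}.
\]
For $k\geqslant L/M$ the right-hand side is at most $Ck^{-\frac{N(p-1)+p}{N}}M^{\frac{N+p}{N}}$. For $k<L/M$ I would use the trivial bound $|Q_T|$ instead. This shows $|u|\in L^{q,\infty}$ with $q=\frac{N(p-1)+p}{N}$. The reformulation via powers is then just the identity $\||u|^s\|_{L^{r,\infty}}=\|u\|^s_{L^{sr,\infty}}$ (the constant-exponent version of Lemma~\ref{lemma3}), taken with $s=\frac{N(p-1)+p}{N+p}$ and $r=\frac{N+p}{N}$. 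The stated dependence on $L$ and $|Q_T|$ should come from optimizing the threshold $k_0$ between the two regimes.

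\textbf{Step 3: level sets of $\nabla u$.} This is the main step. For $\lambda,k>0$ split
\[
\{|\nabla u|>\lambda\}\subset\{|u|>k\}\cup\{|u|\leqslant k,\ |\nabla u|>\lambda\}.
\]
\begin{itemize}
\item The first set has measure bounded as in Step 2.
\item The second set, by Chebyshev and the hypothesis, has measure at most $\lambda^{-p}\iint|\nabla T_k(u)|^p\leqslant\lambda^{-p}(kM+L)$.
\end{itemize}
Choose $k$ to balance the two terms in the large-$k$ regime, namely $k^{-q}\sim k\lambda^{-p}$, i.e.\ $k=\lambda^{p/(q+1)}$. With $q+1=\frac{(N+1)p}{N}$ this gives $k=\lambda^{N/(N+1)}$, and then
\[
\operatorname{meas}\{|\nabla u|>\lambda\}\lesssim\lambda^{-p+\frac{N}{N+1}}=\lambda^{-\frac{N(p-1)+p}{N+1}}.
\]
Hence $|\nabla u|\in L^{\frac{N(p-1)+p}{N+1},\infty}$. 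Applying the power identity with $s=\frac{N(p-1)+p}{N+2}$ yields $|\nabla u|^{s}\in L^{\frac{N+2}{N+1},\infty}(Q_T)$.

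\textbf{Main obstacle.} The delicate part is bookkeeping the constants so they appear in the precise form $M+|Q_T|^{\cdots}L^{\cdots}$. For that I would scale $k=\lambda^{N/(N+1)}M^{\gamma}$ with $\gamma$ chosen to make the estimate homogeneous in $M$. I would treat small $\lambda$ (where $k<L/M$) separately via the trivial bound by $|Q_T|$; this is the source of the $|Q_T|$ and $L$ powers.
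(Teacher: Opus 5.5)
The paper does not prove this lemma; it quotes it from \cite{DiNardo2011}. Your route is the standard argument behind that result: parabolic Gagliardo--Nirenberg for $T_k(u)$, Chebyshev on $\{|u|>k\}$, then $\{|\nabla u|>\lambda\}\subset\{|u|>k\}\cup\{|\nabla T_k(u)|>\lambda\}$ with $k\sim\lambda^{N/(N+1)}$. Your exponent bookkeeping is correct and gives both memberships.

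Note that the first displayed estimate is misprinted. Your argument yields the $L^{\frac{N+p}{N},\infty}$ quasi-norm, as in the membership claim, with the factor $|Q_T|^{\frac{N}{(N+2)p}}$, i.e.\ the same right-hand side as the gradient estimate. It does not yield $L^{\frac{N+p}{p},\infty}$ and $|Q_T|^{\frac{Np}{N+2}}$; the printed space fails, e.g., for the heat kernel when $p=2$, $N\geqslant 3$.

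The genuine gap is in the quantitative part, which you identify as the main obstacle. Suppose you pin the threshold at $k=L/M$ and use only $\operatorname{meas}\leqslant|Q_T|$ below it. Then you get $\sup_{k<L/M}k^{q}\operatorname{meas}\{|u|>k\}\leqslant (L/M)^{q}|Q_T|$. In Step 3 you similarly get $\lambda_0^{r}|Q_T|$ with $\lambda_0=L^{\frac{N+1}{N}}M^{-\frac{N+2}{N}}$ and $r=\frac{N(p-1)+p}{N+1}$. Both bounds carry negative powers of $M$ and blow up as $M\to 0$, so they cannot be dominated by $C[M+|Q_T|^{a}L^{b}]$. The trivial bound alone never produces the $|Q_T|$--$L$ term.

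What is missing is to keep the $L$-part of the Chebyshev bound in the low regime and take its minimum with $|Q_T|$.

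For $u$, use a free threshold $k_0$. For $k\geqslant k_0$ use $kM+L\leqslant k(M+L/k_0)$, and for $k<k_0$ use $k^{q}\operatorname{meas}\leqslant k_0^{q}|Q_T|$. Then choose $k_0=L^{\frac{N+p}{p(N+2)}}|Q_T|^{-\frac{N}{p(N+2)}}$. This gives $\sup_k k^{q}\operatorname{meas}\{|u|>k\}\leqslant C\Phi^{\frac{N+p}{N}}$ with $\Phi:=M+|Q_T|^{\frac{N}{(N+2)p}}L^{\frac{N(p-1)+p}{(N+2)p}}$.

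For the gradient, for every $k>0$,
\[
\operatorname{meas}\{|\nabla u|>\lambda\}\leqslant Ck^{-q}\Phi^{\frac{N+p}{N}}+\lambda^{-p}k\Phi+\min\{|Q_T|,\lambda^{-p}L\}.
\]
The choice $k=\Phi^{\frac{1}{N+1}}\lambda^{\frac{N}{N+1}}$ makes the first two terms at most $C\Phi^{\frac{N+2}{N+1}}\lambda^{-r}$. For the last term, $\lambda^{r}\min\{|Q_T|,\lambda^{-p}L\}\leqslant |Q_T|^{\frac{N}{(N+1)p}}L^{\frac{r}{p}}\leqslant \Phi^{\frac{N+2}{N+1}}$.

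Finally, raise $\sup_k k^q\operatorname{meas}\{|u|>k\}$ to the power $\frac{N}{N+p}$ and $\sup_\lambda \lambda^{r}\operatorname{meas}\{|\nabla u|>\lambda\}$ to the power $\frac{N+1}{N+2}$. Via your power identity, this gives both estimates with right-hand side $C\Phi$, linear in $M$ as the statement requires.
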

\begin{lemma}\label{lemma7}
	Let $\Omega$ be open set of $R^N$ of finite measure and $u$ is a measurable function such that for every positive $K$
	\begin{equation}
		T_k(u)\in L^{p^-}((0,T);W_0^{1,p(x)}(\Omega))\cap L^{\infty} (0,T;L^2(\Omega))
	\end{equation}
\end{lemma}
\noindent and
\begin{equation}
	\sup_{t\in(0,T)} \int_{\Omega}{ \left\lvert T_k(u)\right\rvert^2  } +\iint_{Q_T}{\left\lvert \nabla T_k(u)\right\rvert ^{p(x)}}\leqslant CMk.  
\end{equation}
Then
\begin{equation}
	\left\lVert \left\lvert u\right\rvert^{\frac{N(p^--1)+p^-}{N+p^-}} \right\rVert _{L^{\frac{N+p^-}{N},\infty}(Q_T)}\leqslant C[M+\left\lvert Q_T\right\rvert^{\frac{N((p^-)^2-1)+(N+1)p^-}{N+2}} ], 
\end{equation}
\begin{equation}
	\left\lVert \left\lvert \nabla u\right\rvert^{\frac{N(p^--1)+p^-}{N+2}} \right\rVert _{L^{\frac{N+2}{N+1},\infty}(Q_T)}\leqslant C[M+\left\lvert Q_T\right\rvert^{\frac{N+1}{N+2}}],
	\label{(2.21)}
\end{equation}
where $C=C(N,p^-)$ is a constant.
\begin{proof}
	Using generalized Young's inequality,we have
	\begin{equation}
		\iint_{Q_T}{\left\lvert \nabla T_k(u)\right\rvert ^{p^-}}\mathrm{d}x\mathrm{d}t \leqslant \iint_{Q_T}{\left\lvert \nabla T_k(u)\right\rvert ^{p(x)}}\mathrm{d}x\mathrm{d}t+\left\lvert Q_T\right\rvert.
		\nonumber
	\end{equation}
	Then (\ref{(2.21)}) can be showed that 
	\begin{equation}
		\begin{split}
			&\sup_{t\in(0,T)}\int_{\Omega}{\left\lvert T_k(u)\right\rvert^2 }dx+\iint_{Q_T}{\left\lvert \nabla T_k(u)\right\rvert^{p^-} }\mathrm{d}x\mathrm{d}t
			\\ \leqslant& \underset{t\in (0,T)}{sup}\int_{\Omega}{\left\lvert T_k(u)\right\rvert^2 }+\iint_{Q_T}{\left\lvert \nabla T_k(u)\right\rvert^{p(x)} }\mathrm{d}x\mathrm{d}t+\left\lvert Q_T\right\rvert 
			\\ \leqslant& CMk+\left\lvert Q_T\right\rvert. 
		\end{split}
		\nonumber
	\end{equation}
	From lemma 2.5 it follows that
	\begin{equation}
		\left\lVert \left\lvert u\right\rvert^{\frac{N(p^--1)+p^-}{N+p^-}} \right\rVert _{L^{\frac{N+p^-}{N},\infty}(Q_T)}\leqslant C[M+\left\lvert Q_T\right\rvert^{\frac{N((p^-)^2-1)+(N+1)p^-}{N+2}} ], 
		\nonumber
	\end{equation}
	\begin{equation}
		\left\lVert \left\lvert \nabla u\right\rvert^{\frac{N(p^--1)+p^-}{N+2}} \right\rVert _{L^{\frac{N+2}{N+1}},\infty(Q_T)} \leqslant C[M+\left\lvert Q_T\right\rvert^{\frac{N+1}{N+2}} \,],
		\nonumber
	\end{equation}
	where $C=C(N,p^-)$ is a constant.
\end{proof}

\section{Main Results}

The main results of the present paper are collected in the following theorem.
\begin{theorem}	\label{existence}
	Under the assumtions \eqref{as2}-\eqref{as5.1} there exists at least a renormalized solution to \eqref{beginequation}.
\end{theorem}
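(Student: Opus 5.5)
We prove Theorem \ref{existence} by the classical approximation scheme for renormalized solutions (as in \cite{Blanchard2001}, \cite{DiNardo2011}, \cite{Abdellaoui2021}), adapted to variable exponents.

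\textbf{Step 1: approximate problems.} We regularize the data by taking $f_\varepsilon\in L^{(p^-)'}(0,T;W^{-1,p'(x)}(\Omega))\cap L^1(Q_T)$ with $f_\varepsilon\to f$ in $L^1(Q_T)$, and $u_{0\varepsilon}\in C_c^\infty(\Omega)$ with $b(u_{0\varepsilon})\to b(u_0)$ in $L^1(\Omega)$. We also truncate the lower order term to
\[
H_\varepsilon(x,t,\xi)=\frac{H(x,t,\xi)}{1+\varepsilon|H(x,t,\xi)|}.
\]
Since $|H_\varepsilon|\le 1/\varepsilon$, the operator $u\mapsto -\nabla\cdot A(x,t,u,\nabla u)+H_\varepsilon$ is pseudomonotone and coercive on $V$ by \eqref{as3.1}--\eqref{as3.3}, and $b$ is bi-Lipschitz by \eqref{as2}. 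The theory of monotone operators (Lions) combined with Lemma \ref{lemma1} then gives a weak solution $u_\varepsilon\in V$ with $\partial_t b(u_\varepsilon)\in V^*+L^1(Q_T)$.

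\textbf{Step 2: uniform estimates, the main obstacle.} We use $T_k(u_\varepsilon)\chi_{(0,\tau)}$ as test function. Writing $B_k(s)=\int_0^s b'(r)T_k(r)\,dr\ge \frac{b_0}{2}T_k(s)^2$, this yields
\[
\sup_t\int_\Omega |T_k(u_\varepsilon)|^2+\iint_{Q_T}|\nabla T_k(u_\varepsilon)|^{p(x)}\le Ck\Big(\|f\|_1+\|b(u_0)\|_1+\iint_{Q_T} g|\nabla u_\varepsilon|^{\delta(x)}\Big).
\]
Since $H$ has no sign or coercivity, the last term must be absorbed. We proceed as follows.

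\begin{itemize}
\item Apply Lemma \ref{lemma7} to get $|\nabla u_\varepsilon|^{\frac{N(p^--1)+p^-}{N+2}}\in L^{\frac{N+2}{N+1},\infty}$, with norm bounded by $C(M_\varepsilon+1)$, where $M_\varepsilon$ denotes the bracket above.
\item Note that $\delta(x)\le \frac{N(p^--1)+p^-}{N+1}$ up to exponent bookkeeping.
\item Combine Lemmas \ref{lemma3}--\ref{lemma5} with the Lorentz H\"older inequality ($g\in L^{N+2,1}$ is paired with $|\nabla u_\varepsilon|^{\delta}\in L^{\frac{N+2}{N+1},\infty}$).
\end{itemize}

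This gives $\iint g|\nabla u_\varepsilon|^{\delta}\le C\|g\|_{L^{N+2,1}}(M_\varepsilon+1)$. When $\|g\|_{L^{N+2,1}}$ is small, the term is absorbed directly. For general $g$, we first try the splitting of \cite{DiNardo2011}: partition $(0,T)$ into finitely many subintervals $(t_{i},t_{i+1})$ on which $\|g\|_{L^{N+2,1}(\Omega\times(t_i,t_{i+1}))}$ is small, and iterate the estimate from one interval to the next, using the bound on $\int_\Omega |b(u_\varepsilon(t_i))|$ as new initial data. Controlling the variable exponent in this absorption, via the $+1$ in Lemma \ref{lemma5} and the $\eta$-type case split, is where we expect the real difficulty.

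The outcome of Step 2 is:
\begin{itemize}
\item $\iint |\nabla T_k(u_\varepsilon)|^{p(x)}\le Ck$;
\item $b(u_\varepsilon)$ is bounded in $L^\infty(0,T;L^1)$;
\item $H_\varepsilon(x,t,\nabla u_\varepsilon)$ is bounded in $L^1$ and in fact equi-integrable, since $|\nabla u_\varepsilon|^\delta$ is bounded in a strictly better space than $L^1$ against $g$.
\end{itemize}

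\textbf{Step 3: compactness and passage to the limit.} For $S$ as in the definition, $\partial_t S(b(u_\varepsilon))$ is bounded in $V^*+L^1$ and $S(b(u_\varepsilon))$ is bounded in $V$. An Aubin--Simon type argument then gives $u_\varepsilon\to u$ a.e. and $T_k(u_\varepsilon)\rightharpoonup T_k(u)$ in $V$. Testing with $T_1(u_\varepsilon-T_n(u_\varepsilon))$ gives the energy decay
\[
\lim_{n\to\infty}\sup_\varepsilon\frac1n\iint_{\{|u_\varepsilon|<n\}}A(x,t,u_\varepsilon,\nabla u_\varepsilon)\cdot\nabla u_\varepsilon=0.
\]
The key step is the strong convergence $\nabla T_k(u_\varepsilon)\to\nabla T_k(u)$ a.e. We obtain it with the Landes time-regularization $(T_k(u))_\mu$ as test function together with cutoffs $S_n'(u_\varepsilon)$, and then apply the strict monotonicity \eqref{as3.2} through a Minty/Leray--Lions argument. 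With a.e. gradient convergence in hand, Vitali's theorem and the equi-integrability from Step 2 give $H_\varepsilon(x,t,\nabla u_\varepsilon)S'(u_\varepsilon)\to H(x,t,\nabla u)S'(u)$ in $L^1$. The remaining terms of \eqref{def1} pass to the limit by weak--strong convergence. The initial condition \eqref{def1.1} follows from the convergence of $\Theta_S(u_\varepsilon)$ in $C([0,T];L^1(\Omega))$.
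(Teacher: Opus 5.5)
Your proposal is correct and follows essentially the same route as the paper: truncated approximate problems, the $T_k(u_\varepsilon)\chi_{(0,\tau)}$ estimate in which $\iint_{Q_T} g|\nabla u_\varepsilon|^{\delta(x)}$ is absorbed via the Lorentz gradient bound of Lemma \ref{lemma7} on finitely many time slabs where $\|g\|_{L^{N+2,1}}$ is small, Simon compactness for a.e.\ convergence, the energy decay, and Landes regularization with the cutoffs $S_n$ plus Minty's trick for the gradients. Two small corrections, and one point in your favour: first, the comparison you actually need is $\delta(x)\le \frac{N(p^--1)+p^-}{N+2}$, not the version with $N+1$ in the denominator, which is too weak for the Lorentz H\"older pairing. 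With the correct bound, $|\nabla u_\varepsilon|^{\delta(x)}\le 1+|\nabla u_\varepsilon|^{\frac{N(p^--1)+p^-}{N+2}}$, so the variable exponent causes no real difficulty in the absorption. Second, the initial condition should come from compactness of $\Theta_S(u_\varepsilon)$ in $C([0,T];W^{-1,r}(\Omega))$, as in the paper, rather than from a $C([0,T];L^1(\Omega))$ convergence you have not justified. Finally, your equi-integrability of $H_\varepsilon(x,t,\nabla u_\varepsilon)$, coming from the absolute continuity of the $L^{N+2,1}$ norm of $g$, is exactly what legitimizes the weak $L^1$ limit $\varLambda$; the paper asserts that limit from mere $L^1$ bounds.
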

\begin{proof}
The above theorem is to be proved in five steps.
	
\textbf{Step 1.} Approximate equation

Let $ \varepsilon >0 $,consider the following approxiamate problem
	\begin{equation}\label{approximate}
		\begin{cases}
			\dfrac{\partial b_{\varepsilon}(u^{\varepsilon})}{\partial t}-\nabla\cdot\left( A_{\varepsilon}\left( x,t,u^{\varepsilon},\nabla u^{\varepsilon}\right) \right) +H_{\varepsilon}\left( x,t,\nabla u^{\varepsilon} \right) =f^{\varepsilon}\,\,  \,\,&\text{in }Q_T\\
			u_{\varepsilon}(x,t)=0&\text{on }\partial \Omega \times (0,T)\\
			b_{\varepsilon}(u^{\varepsilon})(t=0)=b_{\varepsilon}(u_0^{\varepsilon})&\text{in }\Omega,
		\end{cases} 
	\end{equation}
	where
	\begin{equation}\label{btr}
		b_{\varepsilon}(r)=T_{\frac{1}{\varepsilon}}(b(r))+\varepsilon r,\;\forall r\in R,
	\end{equation}
	\begin{equation}\label{aa}
		A_{\varepsilon}(x,t,s,\xi )=A(x,t,T_{\frac{1}{{\varepsilon}}}(s),\xi),
	\end{equation}
	\begin{equation}\label{ht}
		H_{\varepsilon}(x,t,\xi)=T_{\frac{1}{\varepsilon}}(H(x,t,\xi)).
	\end{equation}
And it is easy to see that
	\begin{equation}\label{fl}
		f^{\varepsilon}\in L^{p'(x)}(Q_T)\text{ and }u_0^{\varepsilon}\in\,D(\Omega),
	\end{equation}
	\begin{equation}\label{ffqt}
		\left\lVert f^{\varepsilon}\right\rVert _{L^1(Q_T)}\leqslant \left\lVert f\right\rVert _{L^1(Q_T)}\text{and}f^{\varepsilon} \to f\text{ strongly in }L^1(Q_T)\text{ and }a.e\text{ in }Q_T ,  
	\end{equation}
	\begin{equation}\label{bbqt}
		\left\lVert b_{\varepsilon}(u^{\varepsilon}_0)\right\rVert _{L^1(\Omega)}\leqslant \left\lVert b(u_0)\right\rVert _{L^1(\Omega)}  \text{ and } b_{\varepsilon}(u^{\varepsilon}_0 )\to b(u_0)\text{ strongly in }L^1(\Omega)\text{ and }a.e\text{ in }\Omega.
	\end{equation}
	By pseudo-monotone operator theory in \cite{Lions1969}, the existence of weak solutions $u^{\varepsilon}\in V$ to \eqref{approximate} can be proved.
	
	\textbf{Step2.} A priori esimate
	
	\indent Let $k>0$, making use of $T_k(u^{\varepsilon})\chi _{(0,t)}$ as a test function in \eqref{approximate} for almost any $t\in(0,T)$, it results
	\begin{equation}
		\begin{split}
			\int_{\Omega}{\varTheta^{\varepsilon} _k\left( u^{\varepsilon} \right) \left( t \right) \mathrm{d}x}+\iint_{Q_{t}}{A\left( x,t,T_{\frac{1}{\varepsilon}}\left( u^{\varepsilon} \right) ,\nabla u^{\varepsilon} \right) \cdot \nabla T_k\left( u^{\varepsilon} \right)}\mathrm{d}x\mathrm{d}s
			\\\leqslant k\iint_{Q_t}{g(x,t)\left\lvert \nabla u ^{\varepsilon}\right\rvert^{\delta(x)}}\mathrm{d}x\mathrm{d}s +\iint_{Q_{t}}{f^{\varepsilon}T_k\left( u^{\varepsilon} \right)}\mathrm{d}x\mathrm{d}t+\int_{\Omega}{\Theta_k^{\varepsilon}(u^{\varepsilon}_0)}\mathrm{d}x
		\end{split}
	\end{equation}
	where $\Theta_k^{\varepsilon}(r)=\int_0^r{T_k(s)b'_{\varepsilon}(s)}\mathrm{d}s$. Denoting by
	$$
	\bar{T}_k\left( s \right) =\int_0^s{T_k\left( \sigma \right) \mathrm{d}\sigma =\left\{ \begin{array}{c}
			\dfrac{s^2}{2},if\,\,\left| s \right|<k\\
			k\left| s \right|-\dfrac{k^2}{2}\, ,if\,\left| s \right|\geqslant k\\
		\end{array} \right.}.
	$$
	Throught the use of assumption of $b$, we have
	\begin{equation}\label{thetatk}
		\int_{\Omega}{\Theta _k^{\varepsilon}(u^{\varepsilon})}(t)\mathrm{d}x\geqslant \frac{b_0}{2}\int_{\Omega}{\left\lvert T_k(u^{\varepsilon})\right\rvert^2 }\mathrm{d}x
	\end{equation}
	and
	\begin{equation}\label{0kb}
		0\leqslant\int_{\Omega}{\Theta _k^{\varepsilon}(u_0^{\varepsilon})}\mathrm{d}x\leqslant k\int_{\Omega}{\left\lvert b_{\varepsilon}(u_0^{\varepsilon})\right\rvert }\mathrm{d}x\leqslant k \left\lVert b(u_0)\right\rVert _{L^1(\Omega)}.
	\end{equation}
	If we take the supremum for $t\in(0,t_1)$,where $t_1\in(0,T)$ will be choosen later, by \eqref{as2} \eqref{thetatk} and \eqref{0kb}, we have
	\begin{equation}
		\begin{split}
			\frac{b_0}{2} &\sup_{t\in(0,t_1)}\int_{\Omega}{\left\lvert T_k(u^{\varepsilon})\right\rvert^2 }\mathrm{d}x+\alpha \iint_{Q_{t_1}}{\left\lvert \nabla T_k(u^{\varepsilon})\right\rvert^{p(x)} }\mathrm{d}x\mathrm{d}t
			\\ &\leqslant k\left(\left\lVert \left\lvert \nabla u^{\varepsilon}\right\rvert^{\delta (x)} \right\rVert _{L^{\frac{N+2}{N+1},\infty}(Q_{t_1})}\left\lVert g\right\rVert _{L^{N+2,1}(Q_{t_1})}+\left\lVert f^{\varepsilon}\right\rVert _{L^1(Q_T)}+\left\lVert b(u_0)\right\rVert _{L^1(\Omega)} \right) 
			\\ &\leqslant Mk,    
		\end{split}
	\end{equation}
	where
	$$M=\left(\left\lVert \left\lvert \nabla u^{\varepsilon}\right\rvert^{\delta (x)} \right\rVert _{L^{\frac{N+2}{N+1},\infty}(Q_{t_1})}\left\lVert g\right\rVert _{L^{N+2,1}(Q_{t_1})}+\left\lVert f^{\varepsilon}\right\rVert _{L^1(Q_T)}+\left\lVert b(u_0)\right\rVert _{L^1(\Omega)} \right).$$
	Using lemma \eqref{lemma2}, lemma \eqref{lemma4} and lemma \eqref{lemma6}, we obtain
	\begin{equation}
		\begin{aligned}[b]
			&\left\lVert \left\lvert \nabla u^{\varepsilon}\right\rvert^{\delta (x)} \right\rVert _{L^{\frac{N+2}{N+1},\infty}(Q_{t_1})} \\
			=&\left\lVert \left\lvert \nabla u^{\varepsilon}\right\rvert^{(p^--1)\frac{N(p(x)-1)+p(x)}{(N+2)(p(x)-1)}} \right\rVert _{L^{\frac{N+2}{N+1},\infty}(Q_{t_1})} \\
			\leqslant& C_1 \left\lVert  \left\lvert \nabla u^{\varepsilon}\right\rvert ^{(p^--1)}\right\rVert ^{\frac{N(p^--1)+p^-}{\left(N+2\right)(p^--1) }} _{L^{\frac{N(p^--1)+p^-}{\left(N+1\right)(p^--1) },\infty}(Q_{t_1})}+C_1 \\
			=&C_1 \left\lVert \nabla u^{\varepsilon}\right\rVert _{L^{\frac{N(p^--1)+p^-}{N+1},\infty}(Q_{t_1})}^{\frac{N(p^--1)+p^-}{N+2}}+C_1 \\
			=&C_1\left\lVert \left\lvert \nabla u^{\varepsilon}\right\rvert ^{\frac{N(p^--1)+p^-}{N+2}}\right\rVert _{L^{\frac{N+2}{N+1},\infty}(Q_{t_1})}+C_1 \\
			\leqslant& C_2 M+C_3 \\
			\leqslant& C_4\left(\left\lVert \left\lvert \nabla u^{\varepsilon}\right\rvert^{\delta (x)} \right\rVert _{L^{\frac{N+2}{N+1},\infty}(Q_{t_1})}\left\lVert g\right\rVert _{L^{N+2,1}(Q_{t_1})}+\left\lVert f^{\varepsilon}\right\rVert _{L^1(Q_T)}+\left\lVert b(u_0)\right\rVert _{L^1(\Omega)} \right) +C_4, \\
		\end{aligned}
	\end{equation}
	where $C_4=C_4(\alpha ,b_0,p^+,p^-,N,\Omega)$.\\
	If we choose $t_1$ such that 
	\begin{equation}\label{c4gln}
		1-C_4\left\lVert g\right\rVert _{L^{N+2,1}(Q_{t_1})}=\frac{1}{2} ,
	\end{equation}
	then we have
	\begin{equation}\label{c5}
		\left\lVert \left\lvert \nabla u^{\varepsilon}\right\rvert^{\delta(x)} \right\rVert _{L^{\frac{N+2}{N+1},\infty}(Q_{t_1})}\leqslant C_5, 
	\end{equation}
	which imply that
	\begin{equation}\label{tk2tkp}
		\sup_{t\in(0,t_1)}\int_{\Omega}{\left\lvert T_k(u^{\varepsilon}(t))\right\rvert^2 }\mathrm{d}x+\iint_{Q_{t_1}}{\left\lvert \nabla T_k(u^{\varepsilon})\right\rvert^{p(x)} }\mathrm{d}x\mathrm{d}t\leqslant C_6.
	\end{equation}
	Let us divide the time interval $[0,T]$ into a finite number subintervals 
	$[t_0, t_1]$, $[t_1, t_2]$, ..., $[t_{s-1}, t_s]$ ($s=T$), such that for each subinterval, an analogous
	condition \eqref{c4gln} is satisfied,then for each subinterval, we obtain a prior estimate similar to \eqref{tk2tkp}. Putting these estimates together, we conclude that
	\begin{equation}\label{c7}
		\left\| | \nabla u^{\varepsilon}|^{\delta(x)} \right\| _{L^{\frac{N+2}{N+1},\infty}(Q_T)}\leqslant C_7, 
	\end{equation}
	and
	\begin{equation}\label{c8}
		\sup_{t\in (0,T)}\int_{\Omega}{\left\lvert T_k(u^{\varepsilon}(t))\right\rvert^2 }\mathrm{d}x+\iint_{Q_{T}}{\left\lvert \nabla T_k(u^{\varepsilon})\right\rvert^{p(x)} }\mathrm{d}x\mathrm{d}t\leqslant C_8,
	\end{equation}
	where $C_7$ and $C_8$ are constants independent of $\varepsilon$.
	Then inequality \eqref{c8} allows us to prove that
	\begin{equation}\label{tkbnd}
		T_k(u^{\varepsilon})\text{ is bounded in }V
	\end{equation}
	and
	\begin{equation}\label{tkwky}
		T_k(u^{\varepsilon})\rightarrow \sigma _k\text{ weakly in }V.
	\end{equation}
	Moreover, by \eqref{c8}, we have
	\begin{equation}\label{qtuk0}
		\lim_{k\rightarrow +\infty}meas\left\{(x,t)\in Q_T:\left\lvert u^{\varepsilon}\right\rvert >k\right\} =0
	\end{equation}
	\textbf{Step3.} Almost everywhere convergence of $u^{\epsilon}$. 
	
	Making use of $T_k(b_{\varepsilon}(u^{\varepsilon}))\chi _{(0,t)}$ as a test function in \eqref{approximate}, it results
	\begin{equation}
		\begin{aligned}[b]
			& \int_{\Omega}{\overline{T}_k(b_{\varepsilon}(u^{\varepsilon})(t))}dx+\iint_{Q_t}{A_{\varepsilon}(x,t,u^{\varepsilon},\nabla u^{\varepsilon})\nabla T_k(b_{\varepsilon}(u^{\varepsilon}))}dxdt
			\\&+\iint_{Q_t}{H_{\varepsilon}(x,t,\nabla u^{\varepsilon})T_k(b_{\varepsilon}(u^{\varepsilon}))}dxdt
			\\=&\iint_{Q_t}{f^{\varepsilon}T_k(b_{\varepsilon}(u^{\varepsilon}))}dxdt+\int_{\Omega}{\overline{T}_kb_{\varepsilon}(u_0^{\varepsilon}) }dx
		\end{aligned}
	\end{equation}
	for almost every $t\in(0,T)$.
	Using \eqref{as5.1}, \eqref{c7} and the fact that
	\begin{equation}
		\begin{aligned}[b]
			&\iint_{Q_t}{A_{\varepsilon}(x,t,u^{\varepsilon},\nabla u^{\varepsilon})\nabla T_k(b_{\varepsilon}(u^{\varepsilon}))}\mathrm{d}x\mathrm{d}t
			\\=&\iint_{\left\{\left\lvert b_{\varepsilon}(u^{\varepsilon})\right\rvert \leqslant k\right\} } {A_{\varepsilon}(x,t,u^{\varepsilon},\nabla u^{\varepsilon})b'_{\varepsilon}\nabla u^{\varepsilon}}\mathrm{d}x\mathrm{d}t \geqslant 0,
		\end{aligned}
	\end{equation} 
	we obtain
	\begin{equation}
		\begin{aligned}[b]
			&\int_{\Omega}{\overline{T}_k(b_{\varepsilon}(u^{\varepsilon})(t))}\mathrm{d}x 
			\\ \leqslant& k\left(\left\lVert \left\lvert \nabla u^{\varepsilon}\right\rvert^{\delta (x)} \right\rVert _{L^{\frac{N+2}{N+1},\infty}(Q_T)}\left\lVert g(x,t)\right\rVert _{L^{N+1,1}(Q_T)}+\left\lVert f\right\rVert _{L^1(Q_T)}+\left\lVert b_{\varepsilon}(u_0^{\varepsilon})\right\rVert _{L^1(\Omega)} \right)  
			\\ \leqslant& C_{9}k.
		\end{aligned}
	\end{equation}
	Since $\overline{T}_1(s)\geqslant \left\lvert s\right\rvert -1,\forall s\in R$, we deduce from above inequality that
	\begin{equation}\label{c10}
		\int_{\Omega}{\left\lvert b_{\varepsilon}(u^{\varepsilon})\right\rvert }\mathrm{d}x=\int_{\Omega}{\overline{T}_1(b_{\varepsilon}(u^{\varepsilon}))}dx+\left\lvert \Omega\right\rvert \leqslant C_{10},
	\end{equation}
	which imply that
	\begin{equation}\label{qtbk0}
		\lim_{k\rightarrow +\infty} meas\left\{(x,t)\in Q_T:\left\lvert b_{\varepsilon}(u^{\varepsilon})\right\rvert>k \right\}=0.
	\end{equation}
	Consider now  nondecreasing function $ \mathcal{J} _k(s) \in C^2(R)$ such that $\mathcal{J} _k(s)=s$ for $\left\lvert s\right\rvert\leqslant \frac{k}{2}$ and $\mathcal{J} _k(s)=k\,\text{sign}(s)$ for $\left\lvert s\right\rvert\geqslant k$. Taking $\mathcal{J} '_k(u^{\epsilon}) $ as a test function in \eqref{approximate}, then
	\begin{equation}
		\begin{aligned}[b]
			&\frac{\partial \mathcal{J} _k\left(b_{\varepsilon} (u^{\varepsilon}) \right)}{\partial t}-\nabla\cdot\left( \mathcal{J}'_k\left( u ^{\varepsilon}\right) A\left( x,t,T_{\frac{1}{\epsilon}}(u^{\varepsilon}),\nabla u^{\varepsilon} \right) \right) 
			\\ &+\mathcal{J}''_k\left( u^{\varepsilon} \right) A\left( x,t,T_{\frac{1}{\varepsilon}}(u^{\varepsilon}),\nabla u^{\varepsilon} \right) \cdot \nabla u^{\varepsilon}+H_{\varepsilon}\left( x,t,\nabla u^{\varepsilon} \right) \mathcal{J}'_k\left( u^{\varepsilon} \right) 
			\\ =&f^{\varepsilon}\mathcal{J}'_k\left( u^{\varepsilon} \right)\text{ in }D'\left( Q_T \right). 
		\end{aligned}
	\end{equation}
	Due to definition of $b$ and $b_{\varepsilon}$, it is clear that
	\begin{equation}
		\begin{aligned}[b]
			\left\{\left\lvert b_{\varepsilon}(u^{\varepsilon})\right\rvert\leqslant k \right\} \subset \left\{\left\lvert b(u^{\varepsilon})\right\rvert \leqslant k \right\} =\left\{b^{-1}(-k)\leqslant u^{\varepsilon}\leqslant b^{-1}(k)\right\} 
		\end{aligned}
	\end{equation}
	as soon as $\varepsilon <\frac{1}{k}$. Then we otain 
	\begin{equation}
		\left\{\left\lvert b_{\varepsilon}(u^{\varepsilon})\right\rvert \leqslant k\right\} \subset  \left\{\left\lvert u^{\varepsilon}\right\rvert \leqslant k^{\ast}\right\}, 
	\end{equation}
	where $k^*$ is a constant independent of $\varepsilon$. Since $$\nabla \mathcal{J}_k (b_{\varepsilon}(u^{\varepsilon}))=\mathcal{J}_k'(b_{\varepsilon}(u^{\varepsilon}))b'_{\varepsilon}\,\nabla T_{k^{\ast}}(u^{\varepsilon}) a.e\text{ in }Q_T$$
	as soon as $k^{\ast}<\frac{1}{\varepsilon}$,
	by \eqref{as2}, \eqref{tkbnd} and definition of $\mathcal{J} _k$, we have
	\begin{equation}\label{jkbnd}
		\mathcal{J}_k(b_{\varepsilon}(u^{\varepsilon}))\text{ is bounded in }V.
	\end{equation}
	Since $\text{ supp}\mathcal{J}'$ is included in $[-k,k]$, it follows that 
	\begin{equation}
		\begin{aligned}[b]
			&\iint_{Q_T}{\left\lvert \mathcal{J}'_k(b_{\varepsilon}(u^{\varepsilon})) A_{\varepsilon}(x,t,u^{\varepsilon},\nabla u^{\varepsilon})\right\rvert ^{p'(x)}}\mathrm{d}x\mathrm{d}t
			\\ \leqslant& (\left\lVert \mathcal{J}'_k\right\rVert _{L^{\infty}(R)}+1)^{\frac{p^-}{p^--1}}\iint _{Q_T}{(cL(x,t)+c\left\lvert \nabla T_{k^{\ast}}(u^{\varepsilon})\right\rvert^{p(x)-1})^{p'(x)}}\mathrm{d}x\mathrm{d}t
			\\ \leqslant& C_{k_1^*}.
		\end{aligned}
	\end{equation}
	Furthermore, by \eqref{as2}, \eqref{as3.3} and \eqref{tkbnd}, it results
	\begin{equation}
		\begin{aligned}[b]
			&\iint_{Q_T}{\left\lvert \mathcal{J}''_k(b_{\varepsilon}(u^{\varepsilon}))A_{\varepsilon}(x,t,u^{\varepsilon},\nabla u^{\varepsilon})b'_{\varepsilon}(u^{\varepsilon})\nabla u^{\varepsilon}\right\rvert }\mathrm{d}x\mathrm{d}dt
			\\ \leqslant& c b_1 \left\lVert \mathcal{J}''_k \right\rVert _{L^{\infty}(R)}\left(\iint_{Q_T}{\left(\left\lvert L(x,t)\right\rvert +\left\lvert \nabla T_{k^{\ast}}(u^{\varepsilon})\right\rvert^{p(x)-1} \right)^{p'(x)}\mathrm{d}x\mathrm{d}t+1 }\right)^{\frac{1}{p'^-}}
			\\ & \cdot \left\lVert \nabla T_{k^{\ast}}(u^{\varepsilon})\right\rVert _{L^p(x)(Q_T)}
			\\ \leqslant& C_{k_2^*},   
		\end{aligned}
	\end{equation}
	where $C_{k_1^{\ast}} $ and $C_{k_2^{\ast}} $ are comstants independently of $\varepsilon$. By assumption \eqref{as5.1} the term $f^{\varepsilon}\mathcal{J}'_k(b_{\varepsilon}(u^{\varepsilon}))$ is bounded in $L^1(Q_T)$. Finally, from \eqref{ht} and \eqref{c5} it follows that
	\begin{equation}\label{c11}
		\begin{aligned}[b]
			&\iint_{Q_T}{\left\lvert H_{\varepsilon}(x,t,\nabla u^{\varepsilon}) \mathcal{J}'_k(b_ {\varepsilon}(u^{\varepsilon}))\right\rvert }\mathrm{d}x\mathrm{d}t
			\\ \leqslant&\left\lVert \mathcal{J}'_k\right\rVert _{L^{\infty}(R)} \iint_{Q_T}{\left\lvert \nabla u^{\varepsilon}\right\rvert ^{\delta (x)}\left\lvert g(x,t)\right\rvert }\mathrm{d}x\mathrm{d}t
			\\ \leqslant& C_{11}.
		\end{aligned}
	\end{equation}
	As a consequence, we have
	\begin{equation}\label{jtbnd}
		\dfrac{\partial \mathcal{J}_k(b_{\varepsilon}(u^{\varepsilon}))}{\partial t}\text{ is bounded in }V^{\ast}+L^1(Q_T).
	\end{equation}
	The main idea to prove almost everywhere convergence of $u^{\varepsilon}$ is essentially from \cite{Blanchard1998}. Let $r$ be fixed such that $r<inf \left\{(p^+)',\frac{N}{N-1}\right\}$. By \eqref{jtbnd} we deduce that
	\begin{equation}\label{c12}
		\left\lVert \dfrac{\partial \mathcal{J}_k(b_{\varepsilon}(u^{\varepsilon}))}{\partial t}\right\rVert _{L^1(0,T;W^{-1,r}(\Omega))}\leqslant C_{12}. 
	\end{equation}
	From the embedding $V\hookrightarrow  L^{p^-}((0,T);W_0^{1,p(x)}(\Omega))$ and \eqref{jkbnd} it follows that
	\begin{equation}\label{jkubnd}
		\mathcal{J}_k(b_{\varepsilon}(u^{\varepsilon}))\text{ is uniformly bounded in }L^{p^-}((0,T);W_0^{1,p(x)}(\Omega)).
	\end{equation}
	Since $W_0^{1,p(x)}(\Omega)\hookrightarrow \hookrightarrow L^{q(x)}(\Omega)\hookrightarrow W^{-1,r}(\Omega)$ where $q(x)=\frac{N}{N-p(x)}$, \eqref{c12} and \eqref{jkubnd} together with Simon compactness Theorem (see corollary4 of \cite{Simon1986}) imply that  $\left\{\mathcal{J}_k(b_{\varepsilon}(u^{\varepsilon}))\right\} _{\varepsilon}$ is compact in $ L^{p^-}(0,T;L^{p(x)}(\Omega))$, and it converges in measure as $\varepsilon \rightarrow 0$. Recalling \eqref{qtbk0}, for all $\sigma >0$, there holds
	\begin{equation*}
		\begin{aligned}[b]
			meas &\left\{ \left\lvert b_{\varepsilon}(u^{\varepsilon})-b_{\eta}(u^{\eta})\right\rvert >\sigma \right\}\leqslant meas\left\{\left\lvert b_{\varepsilon}(u^{\varepsilon})\right\rvert>\frac{k}{2} \right\}
			\\ &+meas\left\{\left\lvert b_{\eta}(u^{\eta})\right\rvert>\frac{k}{2} \right\}+meas\left\{\left\lvert \mathcal{J}_k(b_{\varepsilon}(u^{\varepsilon}))-\mathcal{J}_k(b_{\eta}(u^{\eta}))\right\rvert>\sigma \right\}.
		\end{aligned}
	\end{equation*}
	Let $k$ sufficiently large, then it follows from \eqref{qtbk0} that
	\begin{equation}\label{bjk0}
		\begin{aligned}[b]
			meas \left\{ \left\lvert b_{\varepsilon}(u^{\varepsilon})-b_{\eta}(u^{\eta})\right\rvert >\sigma \right\} \leqslant \varepsilon +meas\left\{\left\lvert \mathcal{J}_k(b_{\varepsilon}(u^{\varepsilon}))-\mathcal{J}_k(b_{\eta}(u^{\eta}))\right\rvert>\sigma \right\}.
		\end{aligned}
	\end{equation}
	Since  $\left\{\mathcal{J}_k(b_{\varepsilon}(u^{\varepsilon}))\right\}$ converges in measure as $\varepsilon \rightarrow 0$, we deduce that $ \left\{\mathcal{J}_k(b_{\varepsilon}(u^{\varepsilon}))\right\}_{\varepsilon} $ is a cauchy sequence in measure. \eqref{bjk0} indicates that $b_{\varepsilon}(u^{\varepsilon})$ is a cauchy sequence in measure.
	According to the Riesz Representation Theorem, we deduce that for a subsequence, still indexed by $\varepsilon$, $b_{\varepsilon}(u^{\varepsilon})$ converges almost everywhere, as $\varepsilon$ goes to zero, to a measurable function $\varpi $ defined on $Q_T$.
	$b_{\varepsilon}^{-1}$ converges everywhere to $b^{-1}$ due to continuous of $b^{-1}$ on $R$. Then we have
	\begin{equation}\label{ub-1}
		u^{\varepsilon} \rightarrow u=b^{-1}(\varpi )\,\,a.e\text{ in }Q_T,
	\end{equation}
	\begin{equation}\label{bb}
		b_{\varepsilon}(u^{\varepsilon})\rightarrow b(u)\,\,a.e\text{ in }Q_T. 
	\end{equation}
	For every $k>0$, \eqref{tkwky}, \eqref{c10} and \eqref{ub-1} taken together yields that
	\begin{equation}\label{tktkwky}
		T_k(u^{\varepsilon})\rightarrow T_k(u)\text{ weakly in V and }a.e\text{ in }Q_T,
	\end{equation}
	\begin{equation}\label{nablatkly}
		\nabla T_k(u^{\varepsilon})\rightarrow \nabla T_k(u)\text{ weakly in }\left(L^{p(x)}(Q_T)\right)^N, 
	\end{equation}
	Using \eqref{c10} and Fatou's lemma, we have
	\begin{equation}
		\int_{\Omega}{\left\lvert b(u)\right\rvert }\mathrm{d}x \leqslant C_{13}. 
	\end{equation}
	\begin{remark}
		For $0<\varepsilon<\frac{1}{k}$, note that
		$$A(x,t,T_{\frac{1}{\varepsilon}}(u^{\varepsilon}),\nabla T_k(u^{\varepsilon}))=A(x,t,T_k(u^{\varepsilon}),\nabla T_k(u^{\varepsilon}))$$ 
		and
		$$\left\lvert A(x,t,T_k(u^{\varepsilon}),\nabla T_k(u^{\varepsilon}))\right\rvert \leqslant c\left(L(x,t)+\left\lvert \nabla T_k(u^{\varepsilon})\right\rvert^{p(x)-1} \right). $$
		It follows that $A(x,t,T_k(u^{\varepsilon}),\nabla T_k(u^{\varepsilon}))$ is bounded in $\left(L^{p'(x)}(Q_T)\right)^N$
		, as $\varepsilon\rightarrow 0$, for any $k>0$. Then
		\begin{equation}\label{atkwky}
			\begin{aligned}[b]
				A_{\varepsilon}(x,t,T_k(u^{\varepsilon}),\nabla T_k(u^{\varepsilon}))\rightarrow \varphi_k\,\text{ weakly in } (L^{p'(x)}(Q_T))^N. 
			\end{aligned}
		\end{equation}
		Due to growth assumption \eqref{as5.1} on $H$, estimate \eqref{c7} yields that
		\begin{equation}
			\begin{aligned}[b]
				&\iint_{Q_T}  \left\lvert H_{\varepsilon}(x,t,\nabla u^{\varepsilon})\right\rvert \mathrm{d}x\mathrm{d}t\\
				\leqslant& \iint_{Q_T}\left\lvert g(x,t)\right\rvert\left\lvert \nabla u^{\varepsilon}\right\rvert^{\delta(x)}\mathrm{d}x\mathrm{d}t
				\\ \leqslant&\left\lVert g(x,t)\right\rVert _ {L^{N+2,1}(Q_T)}\left\lVert \left\lvert \nabla u^{\varepsilon} \right\rvert^{\delta(x)} \right\rVert _{L^{\frac{N+2}{N+1},\infty}(Q_T)}
				\\ \leqslant& C_{14}.
			\end{aligned}
			\nonumber
		\end{equation}
		It follows that there exists $\varLambda $ belonging to $L^1(Q_T)$ and a subsequence of $H_{\varepsilon}(x,t,\nabla u^{\varepsilon}) $, still indexed by $\varepsilon$, such that
		\begin{equation}\label{hwkyl}
			H_{\varepsilon}(x,t,\nabla u^{\varepsilon})\rightarrow \varLambda \text{ weakly in } L^1(Q_T)
		\end{equation}
		as $\varepsilon \to 0 $ .
	\end{remark}
	
	\begin{lemma}\label{int0}
		The subsequence of $u^{\varepsilon}$ defined in Step 1 satisfies
		\begin{equation}
			\begin{aligned}[b]
				\lim_{n\rightarrow +\infty}\limsup_{\varepsilon \rightarrow 0}\frac{1}{n}\iint_{\left\{ \left( x,t \right) \in Q_T:\left| u^{\varepsilon}\left( x,t \right) \right|< n \right\}}{A\left( x,t,u^{\varepsilon},\nabla u^{\varepsilon} \right) \nabla u^{\varepsilon}\mathrm{d}x\mathrm{d}t}=0.
			\end{aligned}
		\end{equation}
		\begin{proof}
			Using the admissible test function $\frac{T_n(u^{\varepsilon})}{n}$ in \eqref{approximate} and recalling \eqref{thetatk} yield that
			\begin{equation}
				\begin{aligned}[b]
					&\frac{1}{n}  \int_{\Omega}{\Theta _n^{\varepsilon}(u^{\varepsilon}(T))}\mathrm{d}x+\frac{1}{n}\iint_{\left\{\left\lvert u^{\varepsilon}\right\rvert<n\right\}}A_{\varepsilon}(x,t,u^{\varepsilon},\nabla u^{\varepsilon})\nabla u^{\varepsilon}\mathrm{d}x\mathrm{d}t
					\\ \leqslant&   \frac{1}{n}\iint_{Q_T}\left\lvert H_{\varepsilon}(x,t,\nabla u^{\varepsilon})T_n(u^{\varepsilon})\right\rvert+\frac{1}{n}\int_{\Omega}\Theta _n^{\varepsilon}(u^{\varepsilon}_0)dx +\frac{1}{n}\iint_{Q_T}\left\lvert f^{\varepsilon}\right\rvert \cdot \left\lvert T_n(u^{\varepsilon})\right\rvert \mathrm{d}x\mathrm{d}t,
				\end{aligned}
				\nonumber
			\end{equation}
			where $\Theta _n^{\varepsilon}(r)=\int_0^r{ b_{\varepsilon}'(s)T_n(s)}\mathrm{d}s$. By \eqref{aa} and $\Theta _n^{\varepsilon}(u^{\varepsilon})(T)\geqslant 0$, it follows that for $0<\varepsilon<\frac{1}{n}$
			\begin{equation}\label{ahf}
				\begin{aligned}[b]
					&\frac{1}{n}\iint_{\left\{\left\lvert u^{\varepsilon}\right\rvert<n\right\}}A_{\varepsilon}(x,t,u^{\varepsilon},\nabla u^{\varepsilon})\nabla u^{\varepsilon}\mathrm{d}x\mathrm{d}t
					\\ \leqslant& \frac{1}{n}\iint_{Q_T}\left\lvert H_{\varepsilon}(x,t,\nabla u^{\varepsilon})T_n(u^{\varepsilon})\right\rvert \mathrm{d}x\mathrm{d}t
					+\frac{1}{n}\int_{\Omega}\Theta _n^{\varepsilon}(u^{\varepsilon}_0)\mathrm{d}x +\frac{1}{n}\iint_{Q_T}\left\lvert f^{\varepsilon}\right\rvert \left\lvert T_n(u^{\varepsilon})\right\rvert \mathrm{d}x\mathrm{d}t.
				\end{aligned}
			\end{equation}
			Now we handle $H$. Lebesgue's dominated convergence theorem and \eqref{ub-1} implies that 
			$$ T_n(u^{\varepsilon})\rightarrow T_n(u) \text{ and weak-}^*\text{in } L^{\infty}(Q_T) .$$
			Then \eqref{hwkyl} and Egorov's theorem allow us to prove that 
			\begin{equation}
				\lim_{\varepsilon \rightarrow 0}\iint_{Q_T}{H_{\varepsilon}\left( x,t,\nabla u^{\varepsilon} \right) T_n\left( u^{\varepsilon} \right) \mathrm{d}x\mathrm{d}t=}\iint_{Q_T}{\varLambda \,\,T_n\left( u \right)}\mathrm{d}x\mathrm{d}t.
			\end{equation}
			From finite almost everywhere of $u$ in $Q_T$ and Lebesgue's dominated convergence theorem it follows that
			\begin{equation}
				\lim_{n\rightarrow \infty}\lim_{\varepsilon \rightarrow 0}\,\,\frac{1}{n}\iint_{Q_T}{H_{\varepsilon}\left( x,t,\nabla u^{\varepsilon} \right) T_n\left( u^{\varepsilon} \right) \,\,\mathrm{d}x\mathrm{d}t=}\,0.
			\end{equation}
			Similar arguments lead to
			\begin{equation}
				\lim_{n\rightarrow \infty} \,\lim_{\varepsilon \rightarrow 0} \,\,\frac{1}{n}\iint_{Q_T}f^{\varepsilon}T_n(u^{\varepsilon}) \mathrm{d}x\mathrm{d}t=0,
			\end{equation}
			\begin{equation}\label{nll0}
				\lim_{n\rightarrow \infty}\,\lim_{\varepsilon \rightarrow 0} \frac{1}{n}\int_{\varOmega}\Theta^{\varepsilon}_n(u^{\varepsilon}_0) \mathrm{d}x=0.
			\end{equation}
			Gathering \eqref{ahf}-\eqref{nll0} we get
			\begin{equation}
				\lim_{n\rightarrow \infty} \,\limsup_{\varepsilon \rightarrow 0} \frac{1}{n}\iint_{\left\{ \left\lvert u^{\varepsilon}\right\rvert< n  \right\}} A_{\varepsilon}(x,t,u^{\varepsilon},\nabla u^{\varepsilon})\nabla u^{\varepsilon}\mathrm{d}x\mathrm{d}t=0.
			\end{equation}
		\end{proof}
	\end{lemma}
	\textbf{Step4.} Convergence of thegradient.

	This step is devoted to introduce  a time regularization of the function $T_k(u)$ for fixed $k>0$. We denote this regularized function to $T_k(u)$ by $(T_k(u))_{\mu}$ with $\mu >0$. It is defined as the unique solution 
	$(T_k(u))_{\mu}\in V\cap L^{\infty}(Q_T)$ of the equation
	$$\partial_t (T_k(u))_{\mu}+\mu((T_k(u))_{\mu}-T_k(u))=0$$
	with the initial condition $(T_k(u))_{\mu}\mid _{t=0}=u^{\mu}_0 \,in \,\,\Omega $ and $u_0^{\mu} \in W_0^{1,p(x)}(\varOmega)\cap L^{\infty}(\varOmega)$.
	Note  that this function has the following properties\cite{Blanchard2001}\cite{Zhang2010}
	\begin{equation}\label{4eqs}
		\begin{cases}
			\partial_t(T_k(u))_{\mu}\in V\cap L^{\infty}(Q_T)\\
			\left\lVert (T_k(u))_{\mu}\right\rVert _{L^{\infty}(Q_T)}\leqslant k \\
			(T_k(u))_{\mu}\rightarrow T_k(u)\,a.e\text{ in }Q_T \text{ and weak-}^*\text{in } L^{\infty}(Q_T)\text{ as }\mu \rightarrow \infty\\
			\nabla (T_k(u))_{\mu}\rightarrow \nabla T_k(u)\text{ strongly in } (L^{p(x)}(Q_T))^N\text{ as }\mu\rightarrow \infty.\\
		\end{cases}
	\end{equation}
	The definition of $(T_k(u))_{\mu}$ fo $\mu>0$ and fixed k makes it possible establish the following lemma.
	\begin{lemma}\label{inner0}
		\cite{Blanchard2001}\cite{Li2022}Let $k\geqslant 0$ be fixed and $S$ be a increasing $C^{\infty}(R)$-function such that $S(r)=r\,for\,\left\lvert r \right\rvert \leqslant k$ and $supp S'$ is compact. Then
		\begin{equation}
			\liminf_{\mu \rightarrow \infty} \underset{\varepsilon \rightarrow 0}{\lim} \int_0^T{\left\langle \dfrac{\partial S(u^{\varepsilon})}{\partial t}, (T_k(u^{\varepsilon})-(T_k(u))_{\mu})\right\rangle} dt \geqslant 0,
		\end{equation}
		where $\left\langle \cdot , \cdot\right\rangle$ denote the duality pairing between $L^1(\Omega)+W^{-1,p'(x)}(\Omega)$ and $L^{\infty}(\Omega)\cap V$. 
	\end{lemma}
	Let $S_n$ be a sequence of increasing $C^{\infty}$-functions such that
	$$S_n(r)=r,\text{ for }\left\lvert r\right\rvert\leqslant n,\text{ supp }S'_n\subset [-2n,2n]\text{ and }\left\lVert S''_n\right\rVert _{L^{\infty}(R)}\leqslant \frac{1}{n}$$  
	for any $n\geqslant 1$.
	We consider the test function $S'_n(u^{\varepsilon})(T_k(u^{\varepsilon})-(T_k(u))_{\mu})\,for\,n\geqslant1$ and $\mu>0$ in \eqref{approximate}. For fixed $k\geqslant0$, we define $W^{\varepsilon}_{\mu}=T_k(u^{\varepsilon})-(T_k(u))_{\mu}$
	and by integrating over $(0,t)$ and then over $(0,T)$, we get
	\begin{equation}
		\begin{aligned}[b]
			&\iint_{Q_T}  {A_{\varepsilon}\left( x,t,u^{\varepsilon},\nabla u^{\varepsilon} \right) S'_n\left( u^{\varepsilon} \right) \nabla W_{\mu}^{\varepsilon}}\,\,dxdt
			\\  =& - \int_0^T\left\langle \frac{\partial u^{\varepsilon}}{\partial t},S'_n(u^{\varepsilon})W^{\varepsilon}_{\mu}\right\rangle dt
			\\ & -\iint_{Q_T}{A_{\varepsilon}\left( x,t,u^{\varepsilon},\nabla u^{\varepsilon} \right) S''\left( u^{\varepsilon} \right) \nabla u^{\varepsilon} W_{\mu}^{\varepsilon}}\,\,dxdt
			\\ &-\iint_{Q_T}{H_{\varepsilon}\left( x,t,\nabla u^{\varepsilon},\nabla u^{\varepsilon} \right) S'\left( u^{\varepsilon} \right) W_{\mu}^{\varepsilon}}\,\,dxdt
			\\ &+\iint_{Q_T}f^{\varepsilon}S'_n(u^{\varepsilon})W^{\varepsilon}_{\mu} dxdt
			\\ =&-I_1-I_2-I_3+I_4.
		\end{aligned}
	\end{equation}
	Now we show the limits of $I_1\backsim I_4$ when $\varepsilon$ goes to zero and $n$, $\mu $ tend to infinity respectly \cite{Blanchard2001}.
	\textbf{Limit of} $I_1$.
	Since the function $ S_n$ belongs $C^{\infty}(R)$ and is increasing, we have $S_n(r)=r$,for $\left\lvert r \right\rvert\leqslant k \text{ and } k \leqslant n $. By virtue ofthe definition of $W^{\varepsilon}_{\mu} $ lemma \eqref{inner0} applies with $S=S_n$ for fixed $k \leqslant n$. Hence, it follows that
	\begin{equation}\label{liminfinner0}
		\liminf_{\mu \rightarrow \infty}\lim_{\varepsilon \rightarrow 0}\int_0^T{\left\langle \frac{\partial S(u^{\varepsilon})}{\partial t}, W^{\varepsilon}_{\mu}\right\rangle} \mathrm{d}t \geqslant 0.
	\end{equation}
	\textbf{Limit of} $I_2$.
	In view of the definition of $S_n$, we have
	\begin{equation}
		\begin{aligned}[b]
			\left\lvert I_2 \right\rvert 
			&=\left\lvert \iint_{Q_T}{A_{\varepsilon}(x,t,u^{\varepsilon}\nabla u^{\varepsilon})}S''_n(u^{\varepsilon})W^{\varepsilon}_{\mu}\nabla u^{\varepsilon} \mathrm{d}x\mathrm{d}t\right\rvert
			\\&\leqslant T \left\lVert S''_n \right\rVert _{L^{\infty}(R)} \left\lVert W^{\varepsilon}_{\mu}\right\rVert _{L^{\infty}(Q_T)}\iint_{\left\{n\leqslant \left\lvert u^{\varepsilon}\right\rvert \leqslant 2n\right\} }A(x,t,u^{\varepsilon},\nabla u^{\varepsilon})\cdot \nabla u^{\varepsilon}\mathrm{d}x\mathrm{d}t
		\end{aligned}
		\nonumber
	\end{equation}
	for any $n\geqslant 1$ and $0<\varepsilon<\frac{1}{2n}$ and for any $\mu >0$. Using $\left\lVert S''_n\right\rVert _{L^{\infty}(R)}\leqslant\frac{1}{n}$ and lemma \eqref{int0}, it is possible to establish
	\begin{equation}\label{llll2}
		\lim_{n\rightarrow \infty}\limsup_{\mu \rightarrow \infty}\,\limsup_{\varepsilon \rightarrow 0}\,I_2\,=0.
	\end{equation}
	\textbf{Limit of} $I_3$.
	Firstly, \eqref{tktkwky} and definition of $W^{\varepsilon}_{\mu}$ imply that for fixed $\mu>0$
	\begin{equation}
		W^{\varepsilon}_{\mu} \rightarrow T_k(u)-(T_k(u))_{\mu}\,\text{ weakly in }L^{p^-}((0,T);W_0^{1,p(x)}(\Omega))
	\end{equation}
	as $\varepsilon \rightarrow 0$.
	Furthmore, \eqref{4eqs} leads to
	\begin{equation}\label{w2k}
		\left\lVert W^{\varepsilon}_{\mu}\right\rVert _{L^{\infty}(Q_T)}\leqslant 2k. 
	\end{equation} 
	Therefore, we deduce that for fixed $\mu>0$
	\begin{equation}\label{wtktk}
		W^{\varepsilon}_{\mu}\rightarrow T_k(u)-(T_k(u))_{\mu}\,\,\, a.e\,\text{ in }\,Q_T\,\text{ and weak-}^*\text{in }L^{\infty}(Q_T).
	\end{equation}
	\\Since $S'_n$ is smooth and bounded, \eqref{ub-1}, \eqref{w2k} and \eqref{wtktk} lead to
	\begin{equation}\label{stk}
		S'_n(u^{\varepsilon})W^{\varepsilon}_{\mu}\rightarrow S'(u)(T_k(u)-(T_k(u))_{\mu})\,\,a.e\,\text{ in }\,Q_T\text{ and weak-}^*\text{in}\,L^{\infty}(Q_T) .
	\end{equation}
	In view of the weak convergence of $H_{\varepsilon}(x,t,\nabla T_{2n}(u^{\varepsilon}))\,in\,L^1(Q_T)$, \eqref{stk} and Egorov's theorem allows us to prove that
	\begin{equation}\label{hsstk}
		\begin{aligned}[b]
			\lim_{\varepsilon\rightarrow0}\iint_{Q_T}{H_{\varepsilon}(x,t,\nabla T_{2n}(u^{\varepsilon}))S'_n(u^{\varepsilon})W^{\varepsilon}_{\mu}}\mathrm{d}x\mathrm{d}t
			=\iint_{Q_T}{\varLambda\, S'_n(u)(T_k(u)-(T_k(u))_{\mu})}\mathrm{d}x\mathrm{d}t.
		\end{aligned}
	\end{equation}
	Appealing to \eqref{4eqs} and passing to the limit ad $\mu\rightarrow\infty$ in \eqref{hsstk} makes it possible to conclude that
	\begin{equation}\label{lll3}
		\lim_{\mu\rightarrow\infty}\lim_{\varepsilon\rightarrow0}\,I_3=0.
	\end{equation}
	\textbf{Limit of} $I_4$. In view of \eqref{fl}, \eqref{ub-1}, \eqref{tktkwky} and \eqref{wtktk}, Lebesgue's convergence theorem imply that for any $\mu>0$ and any $n\geqslant1$
	\begin{equation}\label{fsfstk}
		\begin{aligned}[b]
			\lim_{\varepsilon\rightarrow0}\iint_{Q_T}{{f^{\varepsilon}S'_n(u^{\varepsilon})W^{\varepsilon}_{\mu}}}\mathrm{d}x\mathrm{d}t
			=\iint_{Q_T}{fS'_n(u)(T_k(u)-(T_k(u))_{\mu})}\mathrm{d}x\mathrm{d}t.
		\end{aligned}
	\end{equation}
	Now for fixed $n\geqslant1$, using \eqref{4eqs} and \eqref{fsfstk} makes it possible to pass to the limit as $\mu$ tend to $\mu\rightarrow\infty$ in the above equaction to obtain
	\begin{equation}\label{lll4}
		\lim_{\mu\rightarrow\infty}\lim_{\varepsilon\rightarrow0}\, I_4\,=\,0.
	\end{equation}
	Due to \eqref{liminfinner0}, \eqref{llll2}, \eqref{lll3} and \eqref{lll4}, we have
	\begin{equation}
		\begin{aligned}[b]
			\lim_{n\rightarrow \infty}\limsup_{\mu \rightarrow \infty}\limsup_{\varepsilon \rightarrow 0}
			\iint_{Q_T}{S'_n(u^{\varepsilon})A_{\varepsilon}(x,t,u^{\varepsilon},\nabla u^{\varepsilon})(\nabla T_k(u^{\varepsilon})-\nabla (T_k(u))_{\mu})}\mathrm{d}x\mathrm{d}t \leqslant 0,
		\end{aligned}
	\end{equation}
	By definition of $S'_n$, we have
	$$S'_n(u^{\varepsilon})A_{\varepsilon}(x,t,u^{\varepsilon},\nabla u^{\varepsilon})\nabla T_k(u^{\varepsilon})=A(x,t,u^{\varepsilon},\nabla u^{\varepsilon})\nabla T_k(u^{\varepsilon})\,\text{ for } k\leqslant\frac{1}{\varepsilon}\,and\,k\leqslant n.$$
	Then the above inequality implies that for $k\leqslant n $
	\begin{equation}\label{lalllqt}
		\begin{aligned}[b]
			&\limsup_{\varepsilon\rightarrow0}\iint_{Q_T}A_{\varepsilon}(x,t,u^{\varepsilon},\nabla u^{\varepsilon})\nabla T_k(u^{\varepsilon})\,\mathrm{d}x\mathrm{d}t
			\\ \leqslant& \lim_{n\rightarrow \infty}\limsup_{\mu \rightarrow \infty}\limsup_{\varepsilon \rightarrow 0}
			\iint_{Q_T}{S'_n(u^{\varepsilon})A_{\varepsilon}(x,t,u^{\varepsilon},\nabla u^{\varepsilon}){\nabla (T_k(u))_{\mu}}}\,\mathrm{d}x\mathrm{d}t. 
		\end{aligned}
	\end{equation}
	On the other hand, for $\varepsilon\leqslant \frac{1}{2n}$, we have
	$$ S'_n(u^{\varepsilon})A_{\varepsilon}(x,t,u^{\varepsilon},\nabla u^{\varepsilon})=S'_n(u^{\varepsilon})A(x,t,T_{2n}(u^{\varepsilon}),\nabla T_{2n}(u^{\varepsilon}))\,a.e\text{ in }Q_T.$$
	Using Remark 3.1 it follows that for fixed $n\geqslant1,\,as\, \varepsilon\rightarrow0$,
	\begin{equation}
		\begin{aligned}[b]
			S'_n(u^{\varepsilon})A_{\varepsilon}(x,t,u^{\varepsilon},\nabla u^{\varepsilon})\rightarrow S'_n(u)\varphi _{2n}\,\, \text{ weakly in }L^{p'(x)}(Q_T).
		\end{aligned}
	\end{equation}
	The strongly convergence of $(T_k(u))_{\mu}$ to $T_k(u)$ in V, as $\mu\rightarrow \infty$, makes it possible to conclude that
	\begin{equation}\label{llsask}
		\begin{aligned}[b]
			&\lim_{\mu\rightarrow\infty}\lim_{\varepsilon\rightarrow0}\iint_{Q_T}S'_n(u^{\varepsilon})A_{\varepsilon} (x,t,u^{\varepsilon},\nabla u^{\varepsilon})\nabla (T_k(u))_{\mu}\mathrm{d}x\mathrm{d}t
			\\ =&\iint_{Q_T}{S'_n(u)\varphi_{2n}\nabla T_k(u)}\mathrm{d}x\mathrm{d}t
			\\ =&\iint_{Q_T}{\varphi_{2n}\nabla T_k(u)}\mathrm{d}x\mathrm{d}t
		\end{aligned}
	\end{equation}
	as soon as $k\leqslant n$. Now we claim that
	\begin{equation}\label{nwct}
		\begin{aligned}[b]
			\lim_{\varepsilon\rightarrow0}\iint_{Q_T}{[A(x,t,T_k(u^{\varepsilon}),\nabla T_k(u^{\varepsilon}))-A(x,t,T_k(u^{\varepsilon}),\nabla T_k(u))]} [\nabla T_k(u^{\varepsilon})-\nabla T_k(u)] \mathrm{d}x\mathrm{d}t=0
		\end{aligned}
	\end{equation}
	and
	\begin{equation}\label{and}
		\varphi_k=A(x,t,T_k(u),\nabla T_k(u))\,\,\,\,a.e\text{ in }Q_T.
	\end{equation}
	In fact, for $k\leqslant n$, we have
	$$ A(x,t,T_{2n}(u^{\varepsilon}),\nabla T_{2n}(u^{\varepsilon}))\chi _{\left\{\left\lvert u^{\varepsilon}\right\rvert<k \right\} }=A(x,t,T_k(u^{\varepsilon}),\nabla T_k(u^{\varepsilon}))\chi _{\left\{\left\lvert u^{\varepsilon}\right\rvert<k \right\} }\,a.e\text{ in }Q_T.$$
	Using \eqref{ub-1} and \eqref{atkwky}, we have
	\begin{equation}\label{2nkqt}
		\varphi_{2n}\chi_{\left\{\left\lvert u\right\rvert<k \right\} }=\varphi_k\chi_{\left\{\left\lvert u\right\rvert<k \right\} }\,\,a.e\text{ in }Q_T\backslash \left\{\left\lvert u\right\rvert=k \right\} \text{ for }k\leqslant n
	\end{equation}
	as $\varepsilon\rightarrow 0$.
	As a consequence of \eqref{2nkqt}, we have for $k\leqslant n$
	\begin{equation}\label{tktkqt}
		\varphi_{2n}\nabla T_k(u)=\varphi_k \nabla T_k(u)\,\,a.e\text{ in }Q_T.
	\end{equation}
	Recalling \eqref{lalllqt}, \eqref{llsask} and \eqref{tktkqt} makes it possible to conclede that
	\begin{equation}\label{att}
		\begin{split}
			\limsup_{\varepsilon\rightarrow0}\iint_{Q_T}A(x,t,u^{\varepsilon},\nabla T_k(u^{\varepsilon}))\nabla T_k(u^{\varepsilon})\mathrm{d}x\mathrm{d}t
			\leqslant \iint_{Q_T}{\varphi_k \nabla T_k(u)}\mathrm{d}x\mathrm{d}t.
		\end{split}
	\end{equation}
	Let $k\geqslant 0$ be fixed, by \eqref{as3.2}, we have 
	\begin{equation}\label{atatt}
		\begin{aligned}[b]
			\lim_{\varepsilon\rightarrow0}\iint_{Q_T}{[A(x,t,T_k(u^{\varepsilon}),\nabla T_k(u^{\varepsilon}))-A(x,t,T_k(u^{\varepsilon}),\nabla T_k(u))]}[\nabla T_k(u^{\varepsilon})-\nabla T_k(u)]\mathrm{d}x\mathrm{d}t\geqslant0.
		\end{aligned}
	\end{equation}
	Furthermore, for $\forall \psi \in (L^{p(x)}(Q_T))^N$ by \eqref{as3.3} and \eqref{ub-1} we have
	$$ A_{\varepsilon}(x,t,T_k(u^{\varepsilon}),\psi )\rightarrow A(x,t,T_k(u),\psi )\, a.e\text{ in }Q_T$$
	as $\varepsilon \rightarrow 0$, and 
	$$\left\lvert A(x,t,T_k(u^{\varepsilon}),\psi )\right\rvert\leqslant \left[L(x,t)+\left\lvert\psi  \right\rvert^{p(x)-1} \right] $$
	uniformly for all $\varepsilon<\frac{1}{k}$. It follows that
	\begin{equation}\label{aasgy}
		A_{\varepsilon}(x,t,T_k(u^{\varepsilon}),\psi )\rightarrow A(x,t,T_k(u),\psi )\text{ strongly in }\left(L^{p'(x)}(Q_T)\right)^N.  
	\end{equation}
	Since 
	\begin{equation}
		\begin{aligned}[b]
			\lim_{\varepsilon\rightarrow0}\iint_{Q_T}{[A(x,t,T_k(u^{\varepsilon}),\nabla T_k(u^{\varepsilon}))-A(x,t,T_k(u^{\varepsilon}),\nabla T_k(u))]}[\nabla T_k(u^{\varepsilon})-\nabla T_k(u)]\mathrm{d}x\mathrm{d}t\geqslant0.
		\end{aligned}
		\nonumber
	\end{equation}
	In view of \eqref{tktkwky}, \eqref{att}, \eqref{atatt} and \eqref{aasgy}, we conclude that
	\begin{equation}
		\begin{aligned}[b]
			\lim_{\varepsilon\rightarrow0}\iint_{Q_T}{[A(x,t,T_k(u^{\varepsilon}),\nabla T_k(u^{\varepsilon}))-A(x,t,T_k(u^{\varepsilon}),\nabla T_k(u))]}[\nabla T_k(u^{\varepsilon})-\nabla T_k(u)]\mathrm{d}x\mathrm{d}t=0.
		\end{aligned}
		\nonumber
	\end{equation}
	Then, above equaction implies that 
	\begin{equation}\label{latt}
		\lim_{\varepsilon \rightarrow 0} \iint_{Q_T}{A(x,t,T_k(u^{\varepsilon}),\nabla T_k(u^{\varepsilon}))\nabla T_k(u^{\varepsilon})\mathrm{d}x\mathrm{d}t}=\iint_{Q_T}{\varphi _k \nabla T_k(u)\mathrm{d}x\mathrm{d}t}
	\end{equation}
	For $\forall \rho \in (C_0^{\infty}(Q_T))^N$ and $m>0$, in view of \eqref{as3.2}, \eqref{tktkwky}, \eqref{atatt}, \eqref{atkwky}, \eqref{aasgy} and \eqref{latt}, we obtain that
	\begin{equation}
		\begin{aligned}[b]
			0\leqslant&\lim_{\varepsilon \rightarrow 0}\iint_{Q_T}[A(x,t,T_k(u^{\varepsilon}),\nabla T_k(u^{\varepsilon}))-A(x,t,T_k(u^{\varepsilon}),\nabla T_k(u)-m\rho)][\nabla T_k(u^{\varepsilon})-(\nabla T_k(u)-m\rho)]
			\\=&\lim_{\varepsilon \rightarrow 0}\iint_{Q_T}{A(x,t,T_k(u^{\varepsilon}),\nabla T_k(u^{\varepsilon}))\cdot\nabla T_k(u^{\varepsilon})\,\mathrm{d}x\mathrm{d}t }
			\\&\;\;\;\;-\lim_{\varepsilon \rightarrow 0}\iint_{Q_T}{A(x,t,T_k(u^{\varepsilon}),\nabla T_k(u^{\varepsilon}))\cdot(\nabla T_k(u)-m\rho)\,\mathrm{d}x\mathrm{d}t }
			\\&\;\;\;\;\;-\lim_{\varepsilon \rightarrow 0}\iint_{Q_T}{A(x,t,T_k(u^{\varepsilon}),\nabla T_k(u)-m\rho)\cdot[\nabla T_k(u^{\varepsilon})-(\nabla T_k(u)-m\rho)]\,\mathrm{d}x\mathrm{d}t}
			\\=&\iint_{Q_T}{\varphi _k\cdot \nabla T_k(u)\,\mathrm{d}x\mathrm{d}t}-\iint_{Q_T}{\varphi _k\cdot (\nabla T_k(u)-m\rho)\,\mathrm{d}x\mathrm{d}t}
			\\&-m\iint_{Q_T}{A(x,t,T_k(u),\nabla T_k(u)-m\rho)\cdot \rho \, \mathrm{d}x\mathrm{d}t},
		\end{aligned}
		\nonumber
	\end{equation}
	it follows that
	\begin{equation}
		\iint_{Q_T}{[\varphi _k-A(x,t,T_k(u),\nabla T_k(u)-m\rho)]\cdot \rho \,\mathrm{d}x\mathrm{d}t}\geqslant0.
		\nonumber
	\end{equation}
	Let $m\rightarrow 0$ in above equation, we have
	\begin{equation}
		\iint_{Q_T}{[\varphi _k-A(x,t,T_k(u),\nabla T_k(u))]\cdot \rho \,\mathrm{d}x\mathrm{d}t}\geqslant0.
		\nonumber
	\end{equation}
	Replacing by $\rho$ by $-\rho$, we actually have
	\begin{equation}
		\iint_{Q_T}{[\varphi _k-A(x,t,T_k(u),\nabla T_k(u))]\cdot \rho \,\mathrm{d}x\mathrm{d}t}\leqslant0,
		\nonumber
	\end{equation}
	that is 
	\begin{equation}
		\iint_{Q_T}{[\varphi _k-A(x,t,T_k(u),\nabla T_k(u))]\cdot \rho \,\mathrm{d}x\mathrm{d}t}=0.
		\nonumber
	\end{equation}
	Thus we obtain
	$$\varphi _k=A(x,t,T_k(u),\nabla T_k(u)).$$
	In summary,it follows that \eqref{nwct} and \eqref{and} holds true. Therefore,for any $k\geqslant0$, we conclede that
	\begin{equation}\label{atattt0}
		\begin{aligned}[b]
			[A(x,t,T_k(u^{\varepsilon}),\nabla T_k(u^{\varepsilon}))-A(x,t,T_k(u^{\varepsilon}),\nabla T_k(u))]\left(\nabla T_k(u^{\varepsilon})-\nabla T_k(u)\right)\rightarrow 0 
		\end{aligned}
	\end{equation}
	strongly in $L^1(Q_T)$ as $\varepsilon \rightarrow 0$. 
	Note \eqref{tktkwky} and \eqref{atattt0}, if we use lemma 5 in \cite{Boccardo1988} or lemma 4.6 in \cite{Yazough2018}, then
	\begin{equation}\label{tktksgy}
		T_k(u^{\varepsilon})\rightarrow T_k(u)\,\text{ strongly in }V.
	\end{equation}
	For all $\sigma >0$, there holds 
	\begin{equation}
		\begin{aligned}[b]
			meas &\left\{(x,t)\in Q_T:\left\lvert \nabla u^{\varepsilon}-\nabla u\right\rvert >\sigma\right\} \leqslant meas\left\{(x,t)\in Q_T:\left\lvert u^{\varepsilon}\right\rvert>k \right\} 
			\\ & +meas \left\{(x,t)\in Q_T:\left\lvert u\right\rvert>k\right\} +meas\left\{(x,t)\in Q_T:\left\lvert \nabla T_k(u^{\varepsilon})-\nabla T_k(u)\right\rvert >\sigma\right\}.
			\nonumber
		\end{aligned}
	\end{equation}
	Let $k$ sufficiently large, by \eqref{qtuk0} and above limit, we get
	\begin{equation}\label{uuqt}
		\nabla u^{\varepsilon}\rightarrow \nabla u\,a.e\text{ in }Q_T.
	\end{equation}
	Furthermore, by \eqref{nablatkly}, \eqref{atkwky}, \eqref{nwct}, \eqref{and} and \eqref{aasgy}, we have
	\begin{equation}
		\begin{aligned}[b]
			A(x,t,T_k(u^{\varepsilon}),\nabla T_k(u^{\varepsilon}))\nabla T_k(u)\rightarrow A(x,t,T_k(u),\nabla T_k(u))\nabla T_k(u)\text{ weakly in }L^1(Q_T),
			\\A(x,t,T_k(u^{\varepsilon}),\nabla T_k(u))\nabla T_k(u^{\varepsilon})\rightarrow A(x,t,T_k(u),\nabla T_k(u))\nabla T_k(u)\text{ weakly in }L^1(Q_T),
			\\A(x,t,T_k(u^{\varepsilon}),\nabla T_k(u))\nabla T_k(u)\rightarrow A(x,t,T_k(u),\nabla T_k(u))\nabla T_k(u)\text{ weakly in }L^1(Q_T).
		\end{aligned}
		\nonumber
	\end{equation}
	Using the above convergence results in \eqref{atattt0} shows that for any $k\geqslant 0$
	\begin{equation}\label{atatqt}
		\begin{aligned}[b]
			&A(x,t,T_k(u^{\varepsilon}),\nabla T_k(u^{\varepsilon}))\nabla T_k(u^{\varepsilon})\rightarrow A(x,t,T_k(u),\nabla T_k(u))\nabla T_k(u)\text{ weakly in }L^1(Q_T)\text{ as }\varepsilon \rightarrow 0.
		\end{aligned}
	\end{equation}
	\textbf{Step5.} Passing to the limit.\\
	Let $\phi \in C_0^{\infty}(Q_T), S\in W^{2,\infty}(R)$ such that $\text{supp} S'\subset [-k,k]$ for some $k>0$ and let S be a function of piecewise $C^1$. Taking advantage of $S'(u^{\varepsilon})\phi $ as a test function in \eqref{approximate}, we have
	\begin{equation}\label{long}
		\begin{aligned}[b]
			\frac{\partial \Theta _S^{\varepsilon}(u^{\varepsilon})}{\partial t}-\nabla\cdot (S'(u^{\varepsilon})A_{\varepsilon}(x,t,u^{\varepsilon},\nabla u^{\varepsilon}))&+S''(u^{\varepsilon})A_{\varepsilon}(x,t,u^{\varepsilon},\nabla u^{\varepsilon})\nabla u^{\varepsilon}
			\\ &+H_{\varepsilon}(x,t,\nabla u^{\varepsilon})S'(u^{\varepsilon})=fS'(u^{\varepsilon})\,\text{ in }D'(Q_T),
		\end{aligned}
	\end{equation}
	where $\Theta _S^{\varepsilon}(z)=\int_0^z{b_{\varepsilon}'(s)S'(s)}ds.$ It follows from \eqref{ub-1} and characteristic of $S$ that
	\begin{equation}\label{first}
		\Theta_{S}^{\varepsilon}(u^{\varepsilon})\rightarrow \Theta_{S}(u)\,a.e\,\text{ in }\,Q_T\text{ and weak}^*\text{ in } L^{\infty}(Q_T),
	\end{equation}
	\begin{equation}\label{second}
		S'(u^{\varepsilon})\rightarrow S'(u)\,a.e\,\text{ in }\,Q_T\text{ and weak}^*\text{ in }L^{\infty}(Q_T),
	\end{equation}
	\begin{equation}\label{third}
		S''(u^{\varepsilon})\rightarrow S''(u)\,a.e\,\text{ in }\,Q_T\text{ and weak}^*\text{ in }L^{\infty}(Q_T).
	\end{equation}
	Then by \eqref{atatqt}, we have
	\begin{equation}\label{stst}
		\dfrac{\partial \Theta _S^{\varepsilon}(u^{\varepsilon})}{\partial t}\rightarrow \dfrac{\partial  \Theta _S(u)}{\partial t} \,\text{ in }\,D'(Q_T).
	\end{equation}
	By $\text{supp}S',S''\subset [-k,k]$, we obtain
	\begin{equation}
		S'(u^{\varepsilon})A(x,t,T_{\frac{1}{\varepsilon}}(u^{\varepsilon}),\nabla u^{\varepsilon})=S'(u^{\varepsilon})A(x,t,T_k(u^{\varepsilon}),\nabla T_k(u^{\varepsilon})),
	\end{equation}
	\begin{equation}
		\begin{aligned}[b]
			S''(u^{\varepsilon})A(x,t,T_{\frac{1}{\varepsilon}}(u^{\varepsilon}),\nabla u^{\varepsilon})\cdot\nabla u^{\varepsilon}= S''(u^{\varepsilon})A(x,t,T_k(u^{\varepsilon}),\nabla T_k(u^{\varepsilon}))\cdot \nabla T_k(u^{\varepsilon})
		\end{aligned}
	\end{equation}
	for $0<\varepsilon<\frac{1}{k}$, and almost everywhere $(x,t)\in Q_T$. Recalling \eqref{atkwky}, \eqref{and}, \eqref{long} and \eqref{third} makes it possible to conclude that
	\begin{equation}
		\begin{aligned}[b]
			S'(u^{\varepsilon})&A(x,t,T_K(u^{\varepsilon}),\nabla T_k(u^{\varepsilon}))\rightarrow S'(u)A(x,t,T_k(u),\nabla T_k(u))
		\end{aligned}
	\end{equation}
	weakly in $(L^{p'(x)}(Q_T))^N$.
	Moreover, \eqref{tktksgy}, \eqref{first} and \eqref{stst} imply that
	\begin{equation}
		\begin{aligned}[b]
			S''(u^{\varepsilon})&A(x,t,T_k(u^{\varepsilon}),\nabla T_k(u^{\varepsilon}))\cdot \nabla T_k(u^{\varepsilon})\rightarrow S''(u)A(x,t,T_k(u),\nabla T_k(u))\cdot\nabla T_k(u)
		\end{aligned}
	\end{equation}
	weakly in $L^1(Q_T)$.
	Recalling that $\varLambda $ is the weak limit of $ H_{\varepsilon}(x,t,\nabla u^{\varepsilon})$, since \eqref{uuqt} implies thatwe have
	$$\varLambda=H(x,t,\nabla u), \,a.e\text{ in }Q_T.$$
	It is suffice to note that
	\begin{equation}
		H_{\varepsilon }(x,t,\nabla u^{\varepsilon})S'(u^{\varepsilon})\rightarrow H(x,t,\nabla u)S'(u)\,\text{ weakly in }L^1(Q_T).
		\nonumber
	\end{equation}
	Using the strong convergence of $f^{\varepsilon}$ to $f\,in\,L^1(Q_T)$, we have
	\begin{equation}
		f^{\varepsilon}S'(u^{\varepsilon})\rightarrow fS'(u)\text{ strongly in }L^1(Q_T).
		\nonumber
	\end{equation}
	We finally need to consider the renormalized initial condtion \eqref{def1.1} of problem \eqref{beginequation}. Through the above analysis, we obtain that $\Theta_S^{\varepsilon} (u^{\varepsilon})$ is bounded in $L^{\infty}(Q_T)$, while $\frac{\partial \Theta_S^{\varepsilon}(u^{\varepsilon})}{\partial t}$ is bounded in $V^{\ast}+L^1(Q_T)$.  
	Using the embedding of lemma \eqref{lemma1}, we find that $\frac{\partial \Theta_S^{\varepsilon} (u^{\varepsilon})}{\partial t}$ is bounded in $L^{(p^+)'}(0,T;W^{-1,(p^+)'}(\Omega)$). As a consequence,an Aubin's type lemma(see corollary4 of \cite{Simon1986}) imply that 
	$\Theta_S^{\varepsilon} (u^{\varepsilon})$ lies in a compact set of $C([0,T];W^{-1,r}(\Omega))$, for any $r<\inf\left\{(p^+)',\frac{N}{N-1}\right\}$.
	It follows that $\Theta_S^{\varepsilon}(u^{\varepsilon})(t=0)=\Theta_S^{\varepsilon}(u_0^{\varepsilon})$ convergence to $\Theta_S(u)(t=0)$,strongly in $W^{-1,r}(\Omega)$. Furthermore, \eqref{bbqt} and Lebesgue dominated Convergence Theorem imply that 
	$\Theta_S^{\varepsilon}(u_0^{\varepsilon})\rightarrow \Theta_S(u_0)$, strongly in $L^1(\Omega)$. Therefore, $\Theta_S(u)(t=0)=\Theta_S(u_0)$, $a.e.$ in $\Omega$.
\end{proof}

\newpage
\bibliographystyle{plain}
\bibliography{ref.bib}
\end{document}